\newtheorem{theorem}{Theorem}[section]
\newtheorem*{theorem*}{Theorem}
\newtheorem{corollary}[theorem]{Corollary}
\newtheorem{proposition}[theorem]{Proposition}
\newtheorem{conjecture}[theorem]{Conjecture}
\newtheorem{lemma}[theorem]{Lemma}
\theoremstyle{definition}
\newtheorem{example}[theorem]{Example}
\newtheorem{problem}[theorem]{Problem}
\newcommand{\N}{\mathbf{N}}
\newcommand{\Z}{\mathbf{Z}}
\newcommand{\Q}{\mathbf{Q}}
\newcommand{\R}{\mathbf{R}}
\newcommand{\C}{\mathbf{C}}
\newcommand{\F}{\mathbf{F}}
\renewcommand{\P}{\mathcal{P}}
\renewcommand{\R}{\mathcal{O}}
\renewcommand{\O}{\mathcal{O}}
\newcommand{\e}{\mathrm{e}}
\renewcommand{\epsilon}{\varepsilon}
\DeclareMathOperator{\hcf}{hcf}
\DeclareMathOperator{\GL}{GL}
\DeclareMathOperator{\PGL}{PGL}
\DeclareMathOperator{\Out}{Out}
\DeclareMathOperator{\PGammaL}{P\Gamma L}
\newcommand{\Ind}{\big\uparrow}
\newcommand{\Res}{\big\downarrow}
\newcommand{\ind}{\!\uparrow}
\renewcommand{\theta}{\vartheta}
\newcommand{\Gal}{\mathrm{Gal}}
\newcommand{\mfrac}[2]{{\textstyle\frac{#1}{#2}}}
\newcounter{thmlistcnt}
\newenvironment{thmlist}%
	{\setcounter{thmlistcnt}{0}%
	\begin{list}{\emph{(\roman{thmlistcnt})}}{%
		\usecounter{thmlistcnt}%
		\setlength{\topsep}{0pt}%
		\setlength{\leftmargin}{0pt}%
		\setlength{\itemsep}{0pt}%
		\setlength{\labelwidth}{17pt}
		\setlength{\itemindent}{30pt}}%
	}%
	{\end{list}}%
\newcounter{thmlistcnta}
	{\setcounter{thmlistcnta}{0}%
	\begin{list}{\emph{(\alph{thmlistcnta})}}{%
		\usecounter{thmlistcnta}%
		\setlength{\topsep}{0pt}%
		\setlength{\leftmargin}{0pt}%
		\setlength{\itemsep}{0pt}%
		\setlength{\labelwidth}{17pt}
		\setlength{\itemindent}{30pt}}%
	}%
	{\end{list}}%
\newcounter{thmlistcnte}
	{\setcounter{thmlistcnte}{0}%
	\begin{list}{\emph{(\roman{thmlistcnte})}}{%
		\usecounter{thmlistcnte}%
		\setlength{\topsep}{0pt}%
		\setlength{\leftmargin}{50pt}%
		\setlength{\itemsep}{0pt}%
		\setlength{\labelwidth}{30pt}
		\setlength{\itemindent}{0pt}}%
	}%
	{\end{list}}%
\newcommand{\m}{m}	
\newcommand{\dB}{d}
\newcommand{\s}{\sigma}
\renewcommand{\a}{a}
\renewcommand{\e}{e}
\newcommand{\X}{E}
\newcommand{\hstar}{{\hskip0.5pt\star}}
\newcommand{\mbf}[1]{#1}
\newcommand{\mbfr}[1]{\mathbf{#1}}
\newcommand{\simmod}{\raisebox{6pt}{\rotatebox{180}{$\simeq$}}}
\renewcommand*\env@matrix[1][*\c@MaxMatrixCols c]{%
  \hskip -\arraycolsep
  \let\@ifnextchar\new@ifnextchar
  \array{#1}}
\subjclass[2010]{20B05, secondary 20B15, 20C20}
\begin{document}
\title[Permutation groups containing a regular abelian subgroup]{Permutation groups containing a regular abelian subgroup: the tangled history
of two mistakes of Burnside}
\date{\today}
\author{Mark Wildon}

\maketitle
\thispagestyle{empty}

\begin{abstract}
A group $K$ is said to be a B-group if every permutation group containing $K$ as a regular
subgroup is either imprimitive or $2$-transitive. In the second edition of his
influential textbook on finite groups, Burnside published a proof that cyclic groups
of composite prime-power degree are B-groups. Ten years  later in 1921 he published a proof
that every abelian group of composite degree is a B-group. Both proofs 
are character-theoretic and both have serious flaws. Indeed, the second result is false.
In this note we explain these flaws and prove that every cyclic group of composite
order is a B-group, using only Burnside's character-theoretic methods. We also
survey the related literature, prove some new results on B-groups of prime-power order,
state two related open problems and present some new computational data.
\end{abstract}

\section{Introduction}
In 1911, writing in \S 252 of the second edition of his influential textbook~\cite{BurnsideBook1911}, 
Burnside claimed a proof of the following theorem.

\begin{theorem}\label{thm:main}
Let $G$ be a transitive permutation group of composite prime-power degree
containing a regular cyclic subgroup.
Either $G$ is imprimitive or $G$ is $2$-transitive.
\end{theorem}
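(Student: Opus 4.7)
The plan is to combine the decomposition of the permutation character $\pi$ of $G$ acting on $\Omega$ with an analysis of the Schur ring of $G$-endomorphisms of the permutation module $\mathbf{C}[\Omega]$. Write $n = p^a$ with $a \geq 2$ and let $H$ be a point stabiliser. Since $C$ is regular on $\Omega$, the restriction $\pi \res C$ is the regular character of $C$, so each linear character of $C$ appears in $\pi \res C$ exactly once. By non-negativity of multiplicities $\pi$ itself is multiplicity-free: $\pi = 1 + \chi_1 + \cdots + \chi_k$ with distinct irreducible $\chi_i$, and writing $\chi_i \res C = \sum_{\lambda \in \Lambda_i} \lambda$ partitions $\hat C \setminus \{1\}$ into parts $\Lambda_i$ of size $\chi_i(1)$. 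The goal is to prove $k = 1$.

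Primitivity is encoded combinatorially as follows. Since $C$ is cyclic of prime-power order its subgroups form a chain $1 < C^{(1)} < \cdots < C^{(a)} = C$, and every $G$-invariant block system on $\Omega$ is a set of cosets of some $C^{(b)}$. The permutation character of $G$ on such a block system is a sub-character of $\pi$ whose restriction to $C$ is the sum of exactly those linear characters trivial on $C^{(b)}$ — a subgroup of $\hat C$ of order $p^{a-b}$. Hence $G$ is imprimitive precisely when some proper nontrivial subgroup of $\hat C$ equals $\{1\} \cup \bigsqcup_{i \in S} \Lambda_i$. Passing to the dual ``element'' picture via the identification $\Omega \cong C$, the stabiliser $H$ acts on $C$ with orbits $\{1\}, T_1, \ldots, T_k$, and the corresponding orbit sums span the Schur ring $\mathcal{S} = \End_G \mathbf{C}[\Omega] \subset \mathbf{C}[C]$. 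Schur's multiplier lemma — a consequence of the Galois-equivariance of $\pi$ under $\Gal(\mathbf{Q}(\zeta_{p^a})/\mathbf{Q})$ — shows that each $T_i$ is a union of the Galois orbits $G_b = \{c \in C : c \text{ has order } p^b\}$, so each $T_i$ corresponds to a subset $J_i \subseteq \{1, \ldots, a\}$; primitivity then says that no initial segment $\{1, \ldots, m\}$ with $0 < m < a$ is a union of the $J_i$.

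The heart of the argument is a direct computation in $\mathcal{S}$. Let $T$ be the orbit of $\mathcal{S}$ containing $G_a$ (the generators of $C$) and set $J = J_T$. Since $T \cdot T \in \mathcal{S}$, the coefficient in $T \cdot T$ of an element of $G_c$ (for $c \in J$) depends only on $c$ and must be the same for every $c \in J$. The elementary identities $G_{c'} G_{c''} \subseteq G_{\max(c',c'')}$ for $c' \ne c''$, together with $G_{c'}G_{c'} \subseteq G_0 \cup G_1 \cup \cdots \cup G_{c'}$, allow these coefficients to be computed explicitly, and equating them across $c \in J$ forces $J$ to be a contiguous segment $\{a-t+1, \ldots, a\}$ of $\{1, \ldots, a\}$ ending at $a$. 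But the remaining orbits then partition the initial segment $\{1, \ldots, a-t\}$; primitivity forces $t = a$, so $J = \{1, \ldots, a\}$, $k = 1$, and $G$ is $2$-transitive.

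The step I expect to be the main obstacle — and precisely the point at which Burnside's original arguments failed in the more general abelian setting — is this final coefficient comparison. Pure Galois (multiplier) invariance is insufficient to rule out ``exotic'' partitions such as $J = \{1, 3\}$ when $a = 3$; it is only the positivity of the Schur-ring structure constants, combined with the linear subgroup lattice of a cyclic group of prime-power order, that forces $J$ to be a contiguous top-segment. Verifying this combinatorial rigidity carefully is the technical heart of the proof.
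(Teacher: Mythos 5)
Your reformulation in the dual (Schur-ring) picture is a legitimate translation of Burnside's set-up, and your opening reductions are sound: $\pi$ is multiplicity-free because $\pi\Res_{C}$ is the regular character, blocks correspond to subgroups in the chain $1<C^{(1)}<\cdots<C$, and imprimitivity is equivalent to some proper non-trivial subgroup of $\hat{C}$ being $\{1\}\cup\bigsqcup_{i\in S}\Lambda_i$ (this is Proposition~\ref{prop:imprimitive} of the paper). The fatal step is the claim that Schur's multiplier lemma, i.e.\ Galois-equivariance, ``shows that each $T_i$ is a union of the Galois orbits $G_b$''. The multiplier lemma says only that for each $m$ coprime to $p$ the map $c\mapsto c^m$ \emph{permutes} the basic sets $T_1,\ldots,T_k$; it does not say that it fixes each of them. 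What you assert is that the Schur ring is \emph{rational}, which is strictly stronger and false in general: for $G=C_{p^a}\rtimes K$ with $K$ a proper subgroup of $(\Z/p^a\Z)^\times$ the basic set containing a generator is $K$ itself, a proper subset of the single Galois orbit $G_a$; and in prime degree the proper subgroups of $\mathrm{AGL}_1(p)$ containing the translations are primitive, not $2$-transitive, and have non-rational Schur rings, so even primitivity cannot rescue the inference from Galois-equivariance alone. You invoke rationality before using primitivity, and for $a\ge 2$ the rationality of a primitive Schur ring over $C_{p^a}$ holds only because the theorem itself forces a single orbit — so any proof of your claim must already contain a proof of the theorem. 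This is a genuine circular gap, not a detail.

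Everything downstream depends on that claim: the identification of each $T_i$ with a subset $J_i\subseteq\{1,\ldots,a\}$ and the evaluation of the coefficients of $T\cdot T$ layer by layer both presuppose that the orbits are unions of full layers. (Granting rationality, your endgame does work: for $a=3$ and $J=\{1,3\}$ the coefficients of $T\cdot T$ on $G_1$ and $G_3$ are $p^2(p-1)+(p-2)$ and $p^2(p-2)+2(p-1)$, which differ, while $J=\{2,3\}$ passes and is then excluded by primitivity. So the part you flag as the ``technical heart'' is a finite and correct check; the real difficulty sits one step earlier.) By contrast, the paper extracts far less from the Galois action — only that the basic set containing a generator also contains, for each $0<m<n$, some $p^m c_m$ with $p\nmid c_m$ — and the substance of the proof is the cyclotomic analysis (Lemma~\ref{lemma:cyclotomic} and the induction of \S\ref{sec:main}) needed to deduce transitivity of $H$ from the resulting identities $\sum_{i\in\mathcal{O}}\zeta^{i}=\sum_{i\in\mathcal{O}}\zeta^{p^m c_m i}$. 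Overlooking hidden relations of exactly this kind, by treating roots of unity or orbit sums of them as more independent or more symmetric than they are, is the error the paper diagnoses in Burnside's and Manning's arguments; your rationality claim is an error of the same family.
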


An error in the penultimate sentence of Burnside's proof was noted in
\hbox{\cite[page~24]{BurnsideCWorksI}}, where 
Neumann remarks
`Nevertheless, the theorem is certainly true and can be proved by similar character-theoretic
methods to those that Burnside employed'. 
In \S\ref{sec:preliminaries} we present the correct part
of Burnside's proof
in today's language. In \S\ref{sec:main} we prove Theorem~\ref{thm:main} 
using the lemma on cyclotomic
integers in~\S\ref{sec:lemma} below. 
In \S\ref{sec:mainAlt} we build on the correct part of Burnside's proof in a different way,
obtaining an entirely
character-theoretic proof of the following variation on Theorem~\ref{thm:main}.

\begin{theorem}\label{thm:mainAlt}
Let $G$ be a transitive permutation group of composite non-prime-power degree
containing a regular cyclic subgroup. Either $G$ is imprimitive or $G$ is $2$-transitive.
\end{theorem}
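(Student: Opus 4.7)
I argue by contradiction: suppose $G$ is primitive of composite non-prime-power degree $n$, contains a regular cyclic subgroup $C$, but is not $2$-transitive. By \S\ref{sec:preliminaries} the permutation character is $\pi = 1_G + \sum_{i=1}^r \chi_i$ with $r \geq 2$ distinct non-trivial irreducible constituents, each of multiplicity one, and the supports $X_i := \mathrm{supp}(\chi_i\res C) \subseteq \hat C$ together with $\{1_{\hat C}\}$ partition $\hat C$, with $|X_i| = \chi_i(1)$. The key link to primitivity is the equivalence: for a proper non-trivial subgroup $D < C$, the $D$-cosets form a $G$-invariant block system on $\Omega = C$ if and only if the dual subgroup $D^\perp = \{\lambda \in \hat C : \lambda\res D = 1\}$ equals $\{1_{\hat C}\} \cup \bigcup_{i \in I} X_i$ for some $I \subseteq \{1, \ldots, r\}$ (the corresponding subcharacter of $\pi$ being then the pullback of the regular character of $C/D$). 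Primitivity therefore forces: no such $D^\perp$ arises as such a union.

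Since $n$ is not a prime power, write $n = ab$ with $\gcd(a,b) = 1$ and $a, b > 1$; then $C = A \times B$ with $|A| = a$, $|B| = b$, and dually $\hat C = \hat A \times \hat B$, singling out the candidate ``block-kernels'' $B^\perp = \hat A \times \{1_{\hat B}\}$ and $A^\perp = \{1_{\hat A}\} \times \hat B$. I bring in two further ingredients. First, rationality of $\pi$ forces the Galois group $\Gamma = \Gal(\Q(\zeta_n)/\Q) \cong \Gamma_a \times \Gamma_b$ to permute the $\chi_i$, so each $X_i$ is a union of $\Gamma$-orbits of the ``rectangular'' form (a $\Gamma_a$-orbit in $\hat A$) $\times$ (a $\Gamma_b$-orbit in $\hat B$). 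Second, the restrictions $\pi\res A = b\rho_A$ and $\pi\res B = a\rho_B$, where $\rho_A$, $\rho_B$ are the regular characters, yield counting identities: the multiset of projections of the $X_i$ onto $\hat A$ (resp.\ $\hat B$) covers each non-trivial $\alpha \in \hat A$ exactly $b$ times (resp.\ each non-trivial $\beta \in \hat B$ exactly $a$ times). The aim is to combine these constraints to produce a non-trivial $D < C$ with $D^\perp = \{1_{\hat C}\} \cup \bigcup_{i \in I} X_i$, contradicting primitivity.

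\emph{Main obstacle.} The decisive step is closing this combinatorial loop---extracting the exact equality $D^\perp = \{1_{\hat C}\} \cup \bigcup_{i \in I} X_i$ from the Galois rectangularity and multiplicity counts alone, without recourse to the cyclotomic-integer lemma of \S\ref{sec:lemma} that handles the prime-power case. I expect the cleanest route is to focus on an $X_i$ containing a character of maximal order in $\hat C$: its $\Gamma$-orbit structure, together with the multiplicity counts, should force it (possibly after union with complementary siblings) into either $B^\perp$ or $A^\perp$, thereby identifying the desired block system.
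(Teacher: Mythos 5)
Your setup is essentially the dual formulation of the paper's Proposition~\ref{prop:imprimitive} (and only one direction of your equivalence is actually needed: a single basis set $X_i$ contained in a proper dual subgroup $D^\perp$ already yields imprimitivity, via the kernel of $G$ acting on the corresponding summand $V_i$). But the proof stops exactly where the real work begins, and you say so yourself: the ``decisive step'' of forcing some $X_i$ into a proper $D^\perp$ is precisely what is missing, and the two ingredients you offer in its place are not strong enough to supply it. The restriction identities $\pi\res_A = b\rho_A$ and $\pi\res_B = a\rho_B$ follow from regularity of $C$ alone, so they hold verbatim for imprimitive, non-$2$-transitive groups containing $C$ regularly and cannot distinguish the cases. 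The Galois rectangularity claim is moreover false as stated: rationality of $\pi$ implies that $\Gal(\Q(\zeta_n):\Q)$ \emph{permutes} the sets $X_i$, not that it fixes each one, so an individual $X_i$ need not be a union of Galois orbits. (Already in prime degree $p$, the group $C_p\rtimes C_3\le \mathrm{AGL}_1(\F_p)$ has supports $X_i\subseteq\widehat{C_p}$ that are orbits of a proper subgroup of $(\Z/p\Z)^\times$, hence not Galois-stable; nothing in your argument rules out the analogous behaviour inside a factor of composite $n$.)

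The paper closes the loop with two character-theoretic devices absent from your plan. First, for each prime $p\mid d$ it considers the $p$-th tensor power $V_k^{\otimes p}$: the Frobenius congruence $\pi_k^p\equiv\sum_{j\in B_k}\theta^{jp}$ mod $p$, together with the linear independence of the restricted characters $\pi_\ell$ and Proposition~\ref{prop:imprimitive}, forces each basis set $B_k$ to be the union of a subset of $\langle d/p\rangle$ with proper cosets of $\langle d/p\rangle$ in $\Z/d\Z$. Second, taking two distinct primes $p,q\mid d$ and a $B_k$ disjoint from $\langle d/pq\rangle$ (hence a union of proper cosets of $\langle d/pq\rangle$), it analyses $\overline{\pi_k}\pi_k$: the multiplicity of $\theta_j$ there attains its maximal value $s$ exactly for $j$ in the stabiliser $J=\{j:B_k+j=B_k\}=\langle m\rangle$ with $m>1$, and linear independence then produces a basis set supported on multiples of $m$, whence imprimitivity. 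Your instinct to examine an $X_i$ containing a character of maximal order is reasonable, but without something playing the role of these two steps (or of an S-ring stabiliser argument in the style of Wielandt) the constraints you have assembled admit configurations that are not unions of dual subgroups, so the contradiction cannot be extracted from them alone.
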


In honour of Burnside, Wielandt \cite[\S 25]{WielandtBook1964}
defined a \emph{B-group} to be a group~$K$ such
that every permutation group containing $K$ as a regular subgroup
is either imprimitive or $2$-transitive.
Thus Theorems~\ref{thm:main} and~\ref{thm:mainAlt}  imply that
cyclic groups of composite 
order are B-groups.


The early attempts to prove this result by character-theoretic methods are rich
with interest, but also ripe with errors. Our second aim, which occupies~\S\ref{sec:history},
is to untangle this mess. We end in~\S\ref{sec:abelian} with
some new results on abelian B-groups which require the Classification Theorem
of Finite Simple Groups. We state an open problem on when $C_2^n$ is a B-group, present
a partial solution, consider B-groups of prime-power order and make some further
(much more minor) corrections to the literature.

At a late stage in this work, the author learned of \cite{Knapp}, in which Knapp
gives another way to fix Burnside's proof of Theorem~\ref{thm:main}, using
essentially the same lemma as in~\S\ref{sec:lemma}.
The key step in Knapp's proof is his Proposition~3.1. It
uses two compatible actions of the Galois group of $\Q(\zeta) : \Q$, where $\zeta$ is
a root of unity of order  the degree of $G$: firstly on 
the set permuted by~$G$, and secondly
on the corresponding permutation module. The proof of Theorem~\ref{thm:main} given here
uses only the second action (in a simple way that is isolated in the second step),
and is more elementary in several other respects.
The inductive approach in our  third step is also new. Given
the historical importance of Theorem~\ref{thm:main}, the author
believes it is worth putting this shorter proof on record. Theorem~\ref{thm:mainAlt}
is not proved in~\cite{Knapp}.

\section{Lemma on cyclotomic integers}
\label{sec:lemma}

The following lemma is essentially the same as
 Lemma~4.1 in \cite{Knapp}. A proof is included for completeness.
Recall that the degree of 
the extension of~$\Q$ generated by
a primitive $d$-th root of unity is $\phi(d)$, where $\phi$ is Euler's totient function.

\begin{lemma}\label{lemma:cyclotomic}
 Let $p$ be a prime and let $n \in \N$. 
For each $r$ such that $1 \le r < p^{n-1}$, let
\[ R(r) = \{ r, r + p^{n-1}, \ldots, r + (p-1)p^{n-1} \}. \] 
Let $\zeta$ be a primitive $p^n$-th root of unity and
let $\omega = \zeta^{p^{n-1}}$. 
If $\sum_{i=0}^{p^n-1} \a_i \zeta^i \in \Q[\omega]$
where $\a_i \in \Q$ for each $i$,
then the coefficients $\a_i$ are constant for $i$ in each set $R(r)$.
\end{lemma}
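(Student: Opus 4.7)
The plan is to prove the lemma by writing $\sum_{i=0}^{p^n-1} a_i \zeta^i$ in terms of a basis for $\Q[\zeta]$ over $\Q[\omega]$, and then reading off linear dependencies coefficient-by-coefficient.

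First I would set up the tower of extensions. Since $\omega$ is a primitive $p$-th root of unity, $[\Q[\omega]:\Q] = p-1$, and since $[\Q[\zeta]:\Q] = \phi(p^n) = (p-1)p^{n-1}$, we have $[\Q[\zeta]:\Q[\omega]] = p^{n-1}$. Because $\zeta$ satisfies the polynomial $x^{p^{n-1}} - \omega \in \Q[\omega][x]$ of degree $p^{n-1}$, this must be the minimal polynomial of $\zeta$ over $\Q[\omega]$, and hence $\{1, \zeta, \zeta^2, \ldots, \zeta^{p^{n-1}-1}\}$ is a $\Q[\omega]$-basis of $\Q[\zeta]$.

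Second, I would rewrite the given sum in this basis. Every index $i$ with $0 \le i < p^n$ has a unique expression $i = qp^{n-1} + r$ with $0 \le q < p$ and $0 \le r < p^{n-1}$, and then $\zeta^i = \omega^q \zeta^r$. Grouping terms,
\[
\sum_{i=0}^{p^n-1} a_i \zeta^i \;=\; \sum_{r=0}^{p^{n-1}-1} \Bigl(\sum_{q=0}^{p-1} a_{qp^{n-1}+r}\,\omega^q\Bigr) \zeta^r.
\]
By the basis statement above, this element lies in $\Q[\omega]$ if and only if the coefficient of $\zeta^r$ vanishes for each $r$ with $1 \le r < p^{n-1}$, that is,
\[
\sum_{q=0}^{p-1} a_{qp^{n-1}+r}\,\omega^q = 0 \qquad (1 \le r < p^{n-1}).
\]

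Third, I would invoke the fact that the minimal polynomial of $\omega$ over $\Q$ is $1 + x + \cdots + x^{p-1}$, so the kernel of the $\Q$-linear map $\Q^p \to \Q[\omega]$ sending $(c_0,\ldots,c_{p-1}) \mapsto \sum_q c_q \omega^q$ is the one-dimensional space spanned by $(1,1,\ldots,1)$. Applied to each of the displayed relations, this forces $a_{qp^{n-1}+r}$ to be independent of $q$; equivalently, $a_i$ is constant on each set $R(r)$, as required.

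There is no real obstacle here: once one spots that $\{1,\zeta,\ldots,\zeta^{p^{n-1}-1}\}$ is a $\Q[\omega]$-basis of $\Q[\zeta]$, the proof is a direct coefficient comparison. The only mild subtlety is that the original sum has $p^n$ terms, which is more than the dimension of $\Q[\zeta]$ over $\Q$, so the $a_i$ are not uniquely determined; the content of the lemma is precisely to describe which shifts of the $a_i$ are forced by the hypothesis.
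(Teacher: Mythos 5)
Your proof is correct and follows essentially the same route as the paper's: both reduce to the fact that $X^{p^{n-1}}-\omega$ is the minimal polynomial of $\zeta$ over $\Q(\omega)$ via the Tower Law, split the sum according to the residue of the exponent modulo $p^{n-1}$, and then use that the only rational linear relation among $1,\omega,\ldots,\omega^{p-1}$ is $1+\omega+\cdots+\omega^{p-1}=0$. The paper phrases the last two steps as polynomial divisibility (first by $X^{p^{n-1}}-\omega$ in $\Q(\omega)[X]$, then by $\Phi_{p^n}$ in $\Q[X]$, plus a degree count), whereas you read off coefficients in the $\Q[\omega]$-basis $\{1,\zeta,\ldots,\zeta^{p^{n-1}-1}\}$ and identify the kernel of evaluation at $\omega$; the content is identical.
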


\begin{proof}
By the Tower Law 
$[\Q(\zeta) : \Q(\omega)] = [\Q(\zeta) : \Q]/[\Q(\omega) : \Q] = \phi(p^{n}) / \phi(p) = (p-1)p^{n-1}/(p-1)
= p^{n-1}$. Therefore $\Psi(X) = X^{p^{n-1}}-\omega$ 
is the minimal polynomial of~$\zeta$ over $\Q(\omega)$.
By hypothesis there exists $\gamma \in \Q[\omega]$ such that 
\[ f(X) = -\gamma + \sum_{0 \le i < p^n} \a_i X^i \]
has $\zeta$ as a root. Hence $f(X)$ is divisible in $\Q(\omega)[X]$ by $\Psi(X)$. 
There is a unique expression $f(X) = f_0(X) + \sum_{0 < r < p^{n-1}} f_r(X)$
where
\[ f_r(X) = \sum_{0 \le i < p^n \atop i \equiv r \;\mathrm{mod}\; p^{n-1}} \a_i X^i \]
for $0 < r < p^{n-1}$.
The remainder when~$X^d$ is divided by $\Psi(X)$ has non-zero coefficients
only for those~$X^c$ such that $c$ is congruent to $d$ modulo $p^{n-1}$. 
Therefore each $f_r(X)$ is divisible by $\Psi(X)$ and so
$f_r(\zeta) = 0$ for each $r$. Since the coefficients of $f_r$ for $0 < r <p^{n-1}$ are rational,
it follows that each such $f_r$ is divisible, \emph{now in $\Q[X]$}, by the minimal
polynomial of~$\zeta$ over $\Q$, namely 
$\Phi_{p^n}(X) = 1 + X^{p^{n-1}} 
+ \cdots + X^{(p-1)p^{n-1}}$. 
Since $f_r$ has degree at most $p^n-1$, this implies that
$f_r(X) = b_rX^r \Phi_{p^n}(X)$ for some $b_r \in \Q$. The lemma follows.
\end{proof}

\section{Burnside's method: preliminary results} 
\label{sec:preliminaries}

We may suppose that $G$ acts on $\{0,1,\ldots, d-1\}$, where $d \in \N$ 
is composite, and that $g = (0,1,\ldots,d-1)$ is a $d$-cycle in $G$. Let $H$ be the point stabiliser of $0$.
Let $M = \langle e_0, e_1 \ldots, e_{d-1} \rangle_\C$
be the natural permutation module for $G$. Let $\zeta$ be a primitive $d$-th root of unity and for $0 \le j < d$ let
\begin{equation}\label{eq:v} v_j = \sum_{0 \le i < d} \zeta^{-ij} e_i. \end{equation}
Since $e_i g = e_{i+1}$, where subscripts are taken modulo $d$, we have
$v_j g = \zeta^j v_j$ for each $j$.  Note that $v_0 = \sum_{0 \le i < d} e_i$ spans the (unique) trivial
$\C G$-module of $M$. Let
\begin{equation}\label{eq:unique} M = \langle v_0 \rangle \oplus V_1 \oplus \cdots \oplus V_t 
\end{equation}
be a direct sum decomposition of $M$ into irreducible $\C G$-submodules.
The~$v_j$ are eigenvectors of $g$ with distinct eigenvalues. Therefore they form
a basis of $M$. Moreover, since the eigenvalues are distinct,
each of the summands $V_1, \ldots, V_t$ 
has a basis consisting of some of the $v_j$. Thus the decomposition in~\eqref{eq:unique}
is unique. For each summand $V_k$,
let $B_k = \{ j : 0 < j < p^n, v_j \in V_k\}$. 
Let $\phi_k$ be the character of $V_k$.

The following two lemmas are the key observations in Burnside's method.

\begin{lemma}\label{lemma:invar}
For each $k$ such that $1 \le k \le t$, the vector $\sum_{j \in B_k} v_j$ is $H$-invariant.
\end{lemma}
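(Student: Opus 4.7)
The plan is to exploit the fact that $e_0$ itself is $H$-invariant (since $H$ stabilises the point $0$) and to express $e_0$ in the eigenbasis $\{v_0, v_1, \ldots, v_{d-1}\}$ of $g$. Applying Fourier inversion on the cyclic group $\langle g\rangle$ to the definition in~\eqref{eq:v}, I first compute
\[ \sum_{j=0}^{d-1} v_j \;=\; \sum_{i=0}^{d-1}\Bigl(\sum_{j=0}^{d-1}\zeta^{-ij}\Bigr) e_i \;=\; d\,e_0, \]
so $e_0 = \tfrac{1}{d}\sum_{j=0}^{d-1} v_j$.

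Next I use the decomposition~\eqref{eq:unique}. Because it is a direct sum of $\C G$-submodules, it is in particular a direct sum of $\C H$-submodules, so the associated projections $\pi_0, \pi_1, \ldots, \pi_t$ are $\C H$-module homomorphisms. Consequently each $\pi_k(e_0)$ is $H$-invariant. Since the $v_j$ form a basis of $M$ with $V_k = \langle v_j : j \in B_k\rangle_\C$ (and $\langle v_0\rangle$ the first summand), reading off the expansion of $e_0$ above yields
\[ \pi_k(e_0) \;=\; \tfrac{1}{d}\sum_{j \in B_k} v_j \]
for $1 \le k \le t$. Scaling by $d$ gives the lemma.

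There is essentially no obstacle here: the only point worth verifying carefully is that each $V_k$ really is spanned by the $v_j$ with $j \in B_k$, which has already been established in the discussion preceding the lemma (the eigenvalues $\zeta^j$ of $g$ are distinct, so any $\C G$-summand must be a span of a subset of the $v_j$). The argument is valid for any point stabiliser $H$ and uses nothing beyond the $\C G$-module structure of $M$.
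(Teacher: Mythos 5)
Your proof is correct and follows essentially the same route as the paper's: both identify $\sum_{j\in B_k}v_j$ as $d$ times the image of the $H$-fixed vector $e_0$ under the ($\C G$-equivariant, hence $\C H$-equivariant) projection onto $V_k$. The only difference is that the paper additionally invokes Frobenius reciprocity to note that each $V_k$ has a \emph{unique} one-dimensional $H$-invariant subspace; you correctly observe that this multiplicity-one fact is not needed for the bare statement of the lemma, so your version is a slight streamlining rather than a different argument.
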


\begin{proof}
The permutation character $\pi$ of~$G$ is $1_G + \sum_{k=1}^t \phi_k$, where the summands
are distinct and irreducible.
By Frobenius reciprocity we have
\[ 1 = \langle \pi, \phi_k \rangle_G = \langle 1_H \Ind^G, \phi_k \rangle_G = 
\langle 1_H, \phi_k\Res_H \rangle_H \]
for each $k$.
Therefore each $V_k$ has a unique $1$-dimensional $\C H$-invariant submodule. 
Since $e_0 = \mfrac{1}{p^d}\sum_{0 \le j < d} v_j$ is $H$-invariant, and the projection of~$e_0$ into $V_k$
is $\mfrac{1}{p^d}\sum_{j \in B_k} v_j$, this submodule is spanned
by $\sum_{j \in B_k} v_j$.
\end{proof}

\begin{lemma}\label{lemma:firstStep}
If $\mathcal{O}$ is an orbit of $H$ on $\{0,1,\ldots,d-1\}$ 
and $1 \le k \le t$ then the sum
$\sum_{i \in \mathcal{O}} \zeta^{ij}$
is constant for $j \in B_k$.
\end{lemma}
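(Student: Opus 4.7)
The plan is to exploit the uniqueness of the $H$-invariant line in each $V_k$ (established in Lemma~\ref{lemma:invar}) by expanding the $H$-invariant orbit sum $\sum_{i\in\mathcal{O}}e_i$ in the eigenbasis $\{v_j\}$ and projecting onto $V_k$.

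First, I would note that the vector $E_{\mathcal{O}} = \sum_{i \in \mathcal{O}} e_i$ is $H$-invariant, since $H$ permutes the points of $\mathcal{O}$. Second, I would invert the definition~\eqref{eq:v}: since the Vandermonde-type sum $\sum_{j=0}^{d-1} \zeta^{(i-i')j}$ equals $d$ when $i=i'$ and $0$ otherwise, we recover $e_i = \frac{1}{d}\sum_{0\le j<d} \zeta^{ij} v_j$. Substituting gives
\[ E_{\mathcal{O}} = \frac{1}{d} \sum_{0 \le j < d} \alpha_j\, v_j, \qquad \text{where } \alpha_j = \sum_{i \in \mathcal{O}} \zeta^{ij}. \]

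Third, I would use the decomposition~\eqref{eq:unique}. Since the $v_j$ with $j \in B_k$ form a basis of $V_k$, the projection of $E_{\mathcal{O}}$ onto $V_k$ is $\frac{1}{d}\sum_{j \in B_k} \alpha_j v_j$. As the decomposition~\eqref{eq:unique} is a decomposition of $\C G$-modules (hence of $\C H$-modules), the projections commute with the action of $H$, so this projection is itself $H$-invariant. By Lemma~\ref{lemma:invar}, the $H$-invariant subspace of $V_k$ is one-dimensional and is spanned by $\sum_{j \in B_k} v_j$. Therefore $\sum_{j \in B_k} \alpha_j v_j$ is a scalar multiple of $\sum_{j \in B_k} v_j$, and since the $v_j$ are linearly independent, the coefficients $\alpha_j$ are all equal for $j \in B_k$, which is exactly the claim.

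I do not anticipate a serious obstacle; the only thing requiring any care is verifying that the projection onto $V_k$ commutes with the $H$-action, which follows immediately from the fact that~\eqref{eq:unique} is a $\C G$-module direct sum decomposition. The conceptual content is simply the observation that since the $H$-invariant line in $V_k$ is unique, any $H$-invariant vector expressed in the $v_j$-basis must have constant coefficients across each block $B_k$.
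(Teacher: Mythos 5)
Your proof is correct and is essentially the paper's own argument: expand the $H$-invariant orbit sum $\sum_{i\in\mathcal{O}}e_i$ in the eigenbasis via $e_i = \frac{1}{d}\sum_j \zeta^{ij}v_j$ and invoke Lemma~\ref{lemma:invar} to force the coefficients to be constant on each $B_k$. You have merely made explicit the step (projection onto $V_k$ commutes with $H$, and the fixed line there is unique) that the paper leaves implicit.
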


\begin{proof}
Observe that $\sum_{k \in \mathcal{O}} e_k$ is $H$-invariant.
An easy calculation (which may be
replaced by the observation that the character table of $C_{d}$ is an orthogonal matrix)
shows that $e_i = \mfrac{1}{p^d}\sum_{0 \le j < d} \zeta^{ij} v_j$ for each $i$. Therefore
\[ \sum_{i \in \mathcal{O}} e_i =  \sum_{0 \le j < d}
\Bigl( \sum_{i \in \mathcal{O}}\zeta^{ij} \Bigr) v_j. \]
By Lemma~\ref{lemma:invar} the coefficients are constant for $j \in B_k$.
\end{proof}

The following proposition is used in the final step of the proof of both main theorems.

\begin{proposition}\label{prop:imprimitive}
If there is a prime $p$ dividing $d$ and a 
summand $V_k$ 
whose basis $\{v_j : j \in B_k\}$ contains only basis
vectors $v_j$ with $j$ divisible by $p$ then
there exists a normal subgroup of $G$ containing $g^{d/p}$ 
whose orbits
form a non-trivial block system. 
\end{proposition}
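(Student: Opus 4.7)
The plan is to take $N$ to be the kernel of the representation $\rho_k \colon G \to \GL(V_k)$. Then $N$ is normal in $G$, and since $G$ is transitive and $N$ is normal, the $N$-orbits on $\{0,1,\ldots,d-1\}$ automatically form a block system (a standard fact). The two points requiring argument are (i) that $g^{d/p} \in N$, and (ii) that this block system is non-trivial.

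For (i), the $v_j$ with $j \in B_k$ form a basis of $V_k$ consisting of $g$-eigenvectors of eigenvalue $\zeta^j$, so $g^{d/p}$ acts on $v_j$ by $\zeta^{jd/p}$. The hypothesis $p \mid j$ gives $d \mid jd/p$, whence $\zeta^{jd/p} = 1$; thus $g^{d/p}$ acts as the identity on $V_k$, and $g^{d/p} \in N$.

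For (ii), since $G$ is transitive and permutes the $N$-orbits, all these orbits have the same size; each contains a $\langle g^{d/p}\rangle$-orbit, which has size $p \ge 2$, so no $N$-orbit is a singleton. It remains to show there are at least two $N$-orbits, or equivalently that the space of $N$-fixed vectors in $M$ has dimension at least $2$. This holds because $\langle v_0 \rangle$ and $V_k$ are both fixed by $N$ ($\langle v_0 \rangle$ because it is the trivial summand and $V_k$ because $N = \ker \rho_k$ by definition), and they are linearly independent since $0 \notin B_k$. Hence the block system is non-trivial.

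The key observation driving the argument is that the hypothesis on $B_k$ is precisely the condition for $g^{d/p}$ to act trivially on $V_k$; once that is in hand the rest reduces to the standard correspondence between normal subgroups and block systems together with a dimension count for the subspace of $N$-invariants in $M$. I do not expect any serious obstacle.
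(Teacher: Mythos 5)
Your proof is correct, and its core is the same as the paper's: take $N$ to be the kernel of $G$ acting on $V_k$, note that the hypothesis $p \mid j$ for all $j \in B_k$ forces the eigenvalues $\zeta^{jd/p}$ to equal $1$ so that $g^{d/p} \in N$, and then use the standard fact that orbits of a normal subgroup form a block system. Where you diverge is in showing the block system is non-trivial. The paper shows $N$ is intransitive by invoking Lemma~\ref{lemma:invar}: the line $\langle \sum_{j \in B_k} v_j\rangle$ is $HN$-invariant inside the irreducible non-trivial module $V_k$, so $HN < G$ and hence $N$ cannot be transitive. You instead count $N$-orbits as $\dim M^N$ and observe that $\langle v_0\rangle \oplus V_k \subseteq M^N$ gives at least two orbits, while the fixed-point-free element $g^{d/p} \in N$ rules out singleton orbits. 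Your route is slightly more self-contained (it does not rely on the uniqueness of the $H$-invariant vector in $V_k$) at the cost of quoting the orbit-counting identity for permutation modules; both arguments are sound and of comparable length.
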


\begin{proof}
Let $N$ be the
kernel of $G$ acting on $V_k$. Since $v_j g = \zeta^j v_j$,~$N$ contains~$g^{d/p}$. By 
Lemma~\ref{lemma:invar},~$V_k$ has 
$\langle \sum_{j \in B_k} \!v_j \rangle$ as an $HN$-invariant subspace. Since $V_k$ is 
not the trivial module, 
we have $HN < G$. Hence $N$ is non-trivial but intransitive.
The orbits of the normal subgroup $N$ are blocks 
of imprimitivity for~$G$.
\end{proof}


\section{Proof of Theorem~\ref{thm:main}}
\label{sec:main}

\subsection*{First step}
By hypothesis $G$ has degree $p^n$ where $p$ is prime and \hbox{$n \ge 2$}.
The Galois group $\Gal(\Q(\zeta) : \Q)$ of the field extension \hbox{$\Q(\zeta) : \Q$} 
permutes the basis vectors $v_j$ while preserving the unique direct sum decomposition~\eqref{eq:unique}.
Hence $\Gal(\Q(\zeta) : \Q)$ permutes
the sets $B_1, \ldots, B_t$. 
By Proposition~\ref{prop:imprimitive},
we may assume that every $B_k$ contains some $j$ not divisible by $p$.
Hence, given any $\m$ 
such that $0 <\m < n$,
there exists $j$ 
not divisible by $p$ such that the set $B_k$ containing $p^m$ also contains $j$.
Let $B_\ell$ be the set containing $1$.
Since the Galois group is transitive on
$\{\zeta^j : 0 < j < p^n, p \nmid j\}$, by conjugating $\zeta^j$ to $\zeta$,
we see that $p^\m c \in B_\ell$ 
for some $c$ not divisible by $p$.

Let $\mathcal{P}$ be the partition of 
$\{1,\ldots,p^n-1\}$ into the orbits of $H$ other than~$\{0\}$.
The previous paragraph and Lemma~\ref{lemma:firstStep} imply that
for all $m$ such that $0 < \m < n$ there
exists $c_\m \in \N$, not divisible by $p$, such that
\begin{equation}\label{eq:key} 
\sum_{i \in \mathcal{O}} \zeta^{i} = \sum_{i \in \mathcal{O}} \zeta^{p^\m c_\m i} \end{equation}
for each $\mathcal{O} \in \mathcal{P}$.

\subsection*{Second step} 
We shall show by induction on $n$ that~\eqref{eq:key} implies
that $\mathcal{P}$ is the one-part partition. It then follows that
 $H$ is transitive on $\{1,\ldots,p^n-1\}$
and so $G$ is $2$-transitive, as required. 

Fix $\mathcal{O} \in \mathcal{P}$. 
Taking $\m=n-1$ in~\eqref{eq:key} and applying Lemma~\ref{lemma:cyclotomic} with $\omega = \zeta^{p^{n-1} 
c_{n-1}}$, we find that the coefficients in $\sum_{i \in \mathcal{O}} \zeta^i$ are constant 
on the sets $R(r) = \{r, r+p^{n-1}, \ldots, r+(p-1)p^{n-1} \}$ for $0 < r < p^{n-1}$. Hence
$\mathcal{O}$ is a union of some of these sets,
together with some of $\{p^{n-1}\}$, \ldots, $\{(p-1)p^{n-1}\}$. The contributions
from $R(r)$ to~\eqref{eq:key} are 
\begin{align}
\sum_{i \in R(r)} \zeta^{i} &= 0,  \label{eq:zetasum} \\
\sum_{i \in R(r)} \zeta^{p^\m c_\m i} &= p \zeta^{p^\m c_\m r}.  \label{eq:zetasum2}
\end{align}

\subsubsection*{Case $n=2$.} Let $\omega = \zeta^{pc_1}$.
Taking $m=1$ in~\eqref{eq:key} and
substituting the relations in~\eqref{eq:zetasum} and~\eqref{eq:zetasum2} 
we get
\[ \sum_{ r \in \mathcal{O} \atop {0 < r <p}} 0 + 
\sum_{pi \in \mathcal{O}} \omega^i = \sum_{ r \in \mathcal{O} \atop 0 < r < p }
p \omega^r + \sum_{pi \in \mathcal{O} \atop 0 < i < p  } 1. \] 
This rearranges to 
\[ \bigl|\{\mathcal{O} \cap \{p,2p,\ldots,(p-1)p\}\bigr| +
\sum_{0 < i < p} (p[i \in \mathcal{O}] - [pi \in \mathcal{O}]) \omega^i = 0,
\]
where the Iverson bracket
$[P]$ is $1$ if the statement $P$ is true, and $0$ if false.
Since the minimal polynomial of $\omega$, namely $1 + X + \cdots + X^{p-1}$, has degree $p-1$
and constant
coefficients,
it follows that 
$\bigl|\{\mathcal{O} \cap \{p,\ldots,(p-1)p\}\bigr| = p-1$
and $i \in \mathcal{O}$ for each $i$ such that $0 < i < p$. Thus
$\mathcal{O} = \{1,\ldots, p^2-1\}$ as required.

\subsubsection*{Inductive step} 
Let $n \ge 3$. Let $T = \{p^{n-1},\ldots, (p-1)p^{n-1}\}$.
Substituting~\eqref{eq:zetasum2} in the right-hand-side of~\eqref{eq:key} for first $m=1$ and then a general $m$
such that $0 < m < n$, we have
\[ 
\sum_{ r \in \mathcal{O} \atop 0 < r < p^{n-1}} p\zeta^{p c_1r} 
+ |\mathcal{O} \cap T| \, = \!\!\!
\sum_{r \in \mathcal{O} \atop 0 < r < p^{n-1}} p\zeta^{p^\m c_\m r} 
+ |\mathcal{O} \cap T |.  \]
For each 
$\mathcal{O} \in \mathcal{P}$, define 
$\mathcal{O}_\star = \mathcal{O} \cap \{1,\ldots,p^{n-1}-1\}$.
Clearly $\{\mathcal{O}_\star : \mathcal{O} \in \mathcal{P} \}$ is a set partition of $\{1,\ldots, p^{n-1}-1\}$.
Let $\zeta_\star = \zeta^{pc_1}$ and, for each $\m$ such that $0 < \m < n$, 
choose $d_\m \in \N$ such that $c_1 d_\m \equiv c_\m$ mod $p$. We may suppose that $d_1 = 1$.
Replacing $r$ with~$i_\star$, the previous displayed equation implies
\[ \sum_{i_\star \in \mathcal{O}_\star} \zeta_\star^i =
\sum_{i_\star \in \mathcal{O}_\star } \zeta_\star^{p^{\m-1} d_\m i_\star}. \]
Comparing with~\eqref{eq:key}, we
see that all the conditions are met to apply the inductive hypothesis. Hence
$\mathcal{O}_\star = \{1,\ldots,p^{n-1}-1\}$ and so $\mathcal{O}$ contains
$\{1,\ldots,p^n-1\}\backslash T$. By~\eqref{eq:zetasum} and~\eqref{eq:zetasum2} we have
$\sum_{i \in \{1,\ldots, p^n-1\} \backslash T } \zeta^i = 0$ and 

\vspace*{-6pt}
\[ \sum_{0 < i < p^{n-1} \atop i \not\in T}  
\zeta^{pc_1i }
= p \sum_{0 < i < p^{n-1}} \zeta_\star^i = 
 -p. \]
Substituting these two results in the case $\m=1$ of~\eqref{eq:key} we get 
\[ \sum_{p^{n-1}i \in \mathcal{O} \hskip0.5pt\cap\hskip1pt T} \zeta^{p^{n-1}i} = -p + | \mathcal{O} \cap T|. \]
It follows, as in the final step of the case $n=2$, that $|\mathcal{O} \cap T| = p-1$ and so $\mathcal{O} \supseteq T$
and $\mathcal{O} = \{1,\ldots,p^n-1\}$,  as required.

\section{Proof of Theorem~\ref{thm:mainAlt}} 
\label{sec:mainAlt}

We continue from the end of \S\ref{sec:preliminaries}.
Let $\theta : \langle g \rangle \rightarrow \C$ be the faithful linear character of $\langle g\rangle$
defined by $\theta(g) = \zeta$. For $1 \le k \le t$, let
$\pi_k$ be the character of $V_k$ restricted to $\langle g 
\rangle$. Since $\langle v_j \rangle$ affords $\theta^j$, we have
$\pi_k = \sum_{j \in B_k} \theta^j$. Since the sets $B_1, \ldots, B_t$ are disjoint, 
the characters $\pi_k$ are linearly independent.

Let $p$ be a prime dividing $d$.
The character of $V_k^{\otimes p}$ is $\pi_k^p$. Since $(a+b)^p \equiv a^p + b^p$ mod $p$
for all $a, b \in \Z$, we have
\begin{equation}
\label{eq:ppower} \pi_k^p = \sum_{0 \le r < d/p} \bigl| \{j \in B_k : jp \equiv rp \text{ mod $d$} \} \bigr|
\theta_{rp} 
+ p \pi \end{equation}
for some character $\pi$ of $\langle g \rangle$. 
Let $\pi_k^p - p\pi = a 1_H +  \sum_{\ell=1}^t a_\ell \pi_\ell$.
By the linear independence of the $\pi_\ell$, it follows from~\eqref{eq:ppower} that
if $a_\ell \not=0$ then~$\pi_\ell$ contains only characters of the form $\theta_{rp}$ with $1 \le r < d/p$.
Thus for any such $\ell$,~$B_\ell$ contains only basis vectors~$v_j$ with~$j$
divisible by $p$ and, by Proposition~\ref{prop:imprimitive}, $G$ is imprimitive.
We may therefore assume that
$\bigl| \{j \in B_k : jp \equiv rp \text{ mod $d$} \} \bigr|$ is a multiple
of~$p$ for each $r$ such that $1 \le r < d/p$.
Identifying $\{0,1,\ldots, d-1\}$ with $\Z / d\Z$,
note that $jp \equiv rp$ mod~$d$ if and only if 
$j \in r + \langle d/p \rangle$.
Therefore \emph{for each prime~$p$ dividing $d$, 
each $B_k$ is  the union of a subset of $\langle d/p \rangle$ and some proper
cosets $r + \langle d/p \rangle$.} 

Let $q$ be a prime dividing $d$ other than $p$. Since the
subgroups $\langle d/p \rangle$ and $\langle d/q \rangle$ of $\Z/d\Z$ meet in $0$, 
each member of $\langle d/p \rangle \backslash \{0\}$ is in a proper coset
of $\langle d/q \rangle$, and similarly with $p$ and $q$ swapped.
By the conclusion of the previous paragraph, if
$B_k$ meets $\langle d/pq \rangle$ then $B_k$ contains $\langle d/pq \rangle \backslash \{0\}$.
At most one~$B_k$ has this property. 
If $t=1$ then $G$ is $2$-transitive, so we may assume that $d > pq$ and there exists
$B_k$ not meeting $\langle d/pq \rangle$. 
For this $B_k$ there exist $r_1, \ldots, r_s$ such that $0 < r_1 < \ldots < r_s < d/pq$ and
\[ B_k = \bigcup_{\e=1}^s  (r_\e + \langle d/pq \rangle). \]
Thus $|B_k| = s pq$ and
\begin{equation}
\label{eq:square} \overline{\pi_k} \pi_k  = s (\theta_0 + \theta_{d/pq} + \cdots + \theta_{(pq-1)d/pq} ) + \psi\end{equation}
where the coefficient of $\theta_j$ in $\psi$ is equal to the number
of pairs $(\e,\e')$ such that $j \in -r_\e + r_{\e'} + \langle d/pq \rangle$.
There are exactly $s$ such pairs if and only if for all $\e$ there exists a unique
$\e'$ such that $r_\e + j + \langle d/pq\rangle = r_{\e'} + \langle d/pq \rangle$,
or, equivalently, if and only if $B_k + j = B_k$, where the addition is performed in
$\Z/ d\Z$. Let
\[ J = \{ j \in \Z / d\Z : B_k + j = B_k \}. \]
Since $J$ is a subgroup of $\Z / d\Z$ containing $d/pq$
we have $J = \langle m \rangle$ for some~$m$ dividing $d/{pq}$. Since 
 $0 \not\in B_k$, and so $-r_1, \ldots, -r_s \not\in J$, we have $m > 1$.
Thus~\eqref{eq:square} may be rewritten as
\[ \overline{\pi}_k \pi_k = s\bigl( \theta_0 + \theta_{m} + \cdots
+ \theta_{n-m} \bigr) + \phi \] 
where $\langle \phi, \theta_j \rangle < s$ for all $j$ not divisible by $m$.
By the linear independence of $\pi_1, \ldots, \pi_t$, 
there exists $\pi_k$ such that if $\langle \pi_k, \theta_j \rangle > 0$
then $j$ is a multiple of~$m$. The result now follows from Proposition~\ref{prop:imprimitive}.


\section{A historical survey of Burnside's method and B-groups}
\label{sec:history}

\subsection{Burnside's work for prime-power degree}
We begin in 1901 with \cite[\S 7]{Burnside1901}, in which Burnside 
used character-theoretic arguments
to prove the following important dichotomy.
(All of the papers of Burnside discussed below appear in Volume II of his collected works \cite{BurnsideCWorksII}.)

\begin{theorem}[Burnside 1901 \protect{\cite[\S 7]{Burnside1901}}]\label{thm:Burnside1901}
A permutation group of prime degree~$p$ is either $2$-transitive or contains a normal subgroup of order $p$.
\end{theorem}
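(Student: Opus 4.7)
The plan is to adapt Burnside's character-theoretic method from Section~\ref{sec:preliminaries} to the prime-degree case. Let $G$ act on $\Omega = \{0, 1, \ldots, p-1\}$, let $g = (0, 1, \ldots, p-1) \in G$, and let $H = G_0$. Since $H$ embeds in $\Sym(p-1)$ and so has order coprime to $p$, the subgroup $P = \langle g \rangle$ is a Sylow $p$-subgroup of order $p$. I would work with the decomposition~\eqref{eq:unique}.

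\emph{First step.} The Galois group $\Gal(\Q(\zeta):\Q) \cong (\Z/p\Z)^\times$ acts on the eigenvectors via $\sigma_c(v_j) = v_{cj}$, preserves~\eqref{eq:unique}, and hence permutes $B_1, \ldots, B_t$ transitively (since it acts transitively on $\{v_1, \ldots, v_{p-1}\}$). Thus all $B_k$ have the same cardinality $d = (p-1)/t$, and the Galois stabilizer $K = \{c \in (\Z/p\Z)^\times : cB_1 = B_1\}$ is a subgroup of order $d$. After relabeling so that $1 \in B_1$, one has $B_1 = K$ and the remaining $B_k$ are the other cosets of $K$ in $(\Z/p\Z)^\times$. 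If $t = 1$ then $\pi = 1_G + \chi_1$ has just two irreducible constituents, so $G$ is $2$-transitive.

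\emph{Second step.} Assume $t \geq 2$. I would show that the $H$-orbits on $\{1, \ldots, p-1\}$ coincide with the cosets of $K$. For each $H$-orbit $\mathcal{O}$, Lemma~\ref{lemma:firstStep} forces $\alpha_\mathcal{O} := \sum_{i \in \mathcal{O}} \zeta^i$ to be fixed by $\sigma_c$ for every $c \in K$, so $\alpha_\mathcal{O}$ lies in the fixed field $\Q(\zeta)^K$. This field has as a $\Q$-basis the Gaussian periods $\eta_a := \sum_{i \in aK} \zeta^i$; expressing $\alpha_\mathcal{O}$ in this basis and comparing coefficients in the $\{\zeta^i : 1 \leq i \leq p-1\}$-basis forces $\mathcal{O}$ to be a union of cosets of $K$. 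Since there are $t$ orbits of total size $p - 1 = td$ and $t$ cosets each of size $d$, each $H$-orbit is exactly one coset of $K$.

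\emph{Third step, the main obstacle.} It remains to upgrade this partition statement to the normality of $P$ in $G$. My aim is to show $H \leq N_G(P)$, which together with $G = PH$ forces $P \trianglelefteq G$. Since each $h \in H$ fixes the coset $K$ setwise, the $p$-cycle $hgh^{-1}$ sends $0 \mapsto h(1) \in K$, and I need to deduce $hgh^{-1} \in \langle g \rangle$. The plan is to exploit the faithful irreducible module $V_1$ (faithful because $G$ is primitive of prime degree and $\ker \chi_1 \cap P = 1$): $g|_{V_1}$ is diagonal with distinct eigenvalues $\{\zeta^j : j \in K\}$, so $(hgh^{-1})|_{V_1}$ has the same multiset of eigenvalues. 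Combining this eigenvalue information across all $V_k$ with the Fourier inversion $e_i = \tfrac{1}{p}\sum_j \zeta^{ij} v_j$ should force $h$ to act on $\Omega$ as multiplication by some $b \in K$, so $G \leq \mathrm{AGL}_1(p)$ and $P$ becomes the unique Sylow $p$-subgroup. Ruling out non-multiplicative actions of $H$ on the cosets of $K$ is where I expect the bulk of the technical work to lie.
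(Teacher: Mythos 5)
The paper does not actually prove this theorem: it is quoted from Burnside's 1901 paper and \S 251 of his textbook and used later as a known ingredient (for instance in Lemma~\ref{lemma:CameronKantor}), so your attempt can only be measured against the classical arguments. Your first two steps are correct and are exactly the prime-degree specialisation of the machinery of \S\ref{sec:preliminaries}: the Galois group permutes the basis sets transitively, so the $B_k$ are the cosets of a subgroup $K \le (\Z/p\Z)^\times$; Lemma~\ref{lemma:firstStep} shows each $H$-orbit sum $\sum_{i\in\mathcal{O}}\zeta^i$ lies in the fixed field of $K$ and hence that each orbit is a union of cosets of $K$; and the rank count $\langle\pi,\pi\rangle_G = 1+t$ then pins the orbits down to single cosets. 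If $t=1$ this gives $2$-transitivity.

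The gap is in the third step, and it is a missing idea rather than deferred bookkeeping. The eigenvalue argument is vacuous: $V_k$ is a $\C G$-submodule, so it is invariant under $hgh^{-1}$, and $(hgh^{-1})|_{V_k}$ is conjugate to $g|_{V_k}$ \emph{as a linear map of $V_k$}; it therefore has the same multiset of eigenvalues for every $h\in H$, whether or not $h$ normalises $P$. Likewise, the fact that $V_k$ is spanned by the $hgh^{-1}$-eigenvectors with eigenvalues $\zeta^j$, $j\in B_k$, is again just the $G$-invariance of $V_k$ (those eigenvectors are the $v_jh^{-1}$ up to scalars), so combining this ``across all $V_k$'' with Fourier inversion extracts no constraint on $h$ at all. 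Passing from ``every point stabiliser has the translated cosets of $K$ as its orbits'' to ``$H$ acts by multiplication, so $G\le \mathrm{AGL}_1(\F_p)$'' is the real content of the theorem and requires a genuinely new input: Burnside finishes with a further character-theoretic count, Schur with the multiplicative structure of the S-ring generated by the $H$-orbits, and Neumann (as recalled in \S\ref{sec:history}) with the uniseriality of $\F_p\Omega$ over $\F_p\langle g\rangle$. As written, your argument establishes only that a non-$2$-transitive $G$ has rank $1+(p-1)/|K|$ with suborbits the cosets of $K$, which is strictly weaker than the stated dichotomy.
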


In the following \S 8 he proves Theorem~\ref{thm:main}
for permutation groups of odd degree $p^2$ using character theory.
He comments `It appears highly probable that this
result may be extended to any group of odd order which contains a regular substitution of order equal to the
degree of the group; but I have not yet succeeded in proving this.' 

In the revised second edition of his textbook~\cite{BurnsideBook1911},
Burnside added five entirely new chapters on linear groups and characters. 
Most notably these include the well-known character-theoretic proof of the $p^aq^b$-Theorem.
In~\S 251 
he used the method of cyclotomic sums and basis sets, introduced in his 1906 paper~\cite[\S 7]{Burnside1906}
but presented in his textbook with some simplifications, to prove Theorem~\ref{thm:Burnside1901}.
The following \S 252, whose correct part was presented in \S\ref{sec:preliminaries},
attempts to prove Theorem~\ref{thm:main}.  Burnside's argument appears
to have been generally accepted, both at the time and later, until Neumann
pointed out the error in his essay in \cite{WielandtWerkeI}.
For example, it is cited
without critical comment by Wielandt in~\cite{WielandtBook1964}.
Its mistake is to assert that the only solutions to~\eqref{eq:key} when $m=n-1$
have $|\mathcal{O}| = p^n-1$. This gives one solution, but
there are others. 

Recall that if $1 \le r < p^{n-1}$ then 
$R(r) = \{ r, r + p^{n-1}, \ldots,
r+ (p-1)p^{n-1} \}$.
Define $Z \subseteq \{1,\ldots,p^n-1\}$ to be \emph{null} if there
exists $s \in \N_0$ and distinct $r_{ij} \in \{1,\ldots, p^{n-1}-1\}$
for $0 \le i \le p-1$ and $1 \le j \le s$
such that $r_{ij} \equiv i$ mod $p$ for each $i$ and $j$ and
$Z = \bigcup_{i=0}^{p-1} \bigcup_{j=1}^s R(r_{ij})$. 


\begin{proposition}
Let $n\ge 2$ and let $\omega$ be a primitive $p$-th root of unity. 
Let $\mathcal{O} \subseteq \{1,\ldots,p^n-1\}$. Then 
\[ \sum_{i \in \mathcal{O}} \zeta^i = \sum_{i \in \mathcal{O}} \omega^i \]
if and only if  \emph{either}
\begin{thmlist}
\item $\mathcal{O}$ is null; \emph{or}
\item $\mathcal{O} = \{p^{n-1}, \ldots, (p-1)p^{n-1} \} \; \cup \;
\bigcup_{i=1}^{p-1} R(r_i) \; \cup \; Z$ where $Z$ is null, 
the $r_i$ are distinct elements of $\{1,\ldots,p^{n-1}-1\}\backslash Z$
 and $r_i \equiv i$ mod $p$ for each $i$.
\end{thmlist}
\end{proposition}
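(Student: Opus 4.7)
The plan is to apply Lemma~\ref{lemma:cyclotomic} to impose structure on $\mathcal{O}$, reduce the hypothesis to a short integer system via $\Q$-linear independence of powers of $\omega$, and finish with a small case analysis.

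For the `if' direction I would verify both cases by direct computation, using $\sum_{i \in R(r)} \zeta^i = 0$ (since $\zeta^{p^{n-1}}$ is a primitive $p$-th root of unity), $\sum_{i \in R(r)} \omega^i = p\omega^r$ (which uses $\omega^{p^{n-1}} = 1$, valid for $n \ge 2$), $\sum_{k=1}^{p-1}\zeta^{kp^{n-1}} = -1$ and $\sum_{k=1}^{p-1}\omega^{kp^{n-1}} = p-1$. For case (i), both sides vanish via $\sum_{i=0}^{p-1}\omega^i = 0$. For case (ii), using $r_i \equiv i \pmod p$, the right-hand side evaluates to $(p-1) + p\sum_{i=1}^{p-1}\omega^i = -1$, which equals the left-hand side.

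For the converse, the hypothesis places $\sum_{i \in \mathcal{O}} \zeta^i$ in $\Q[\omega] = \Q[\zeta^{p^{n-1}}]$, so Lemma~\ref{lemma:cyclotomic} applied to the coefficients $a_i = [i \in \mathcal{O}]$ forces $[i \in \mathcal{O}]$ to be constant on each $R(r)$ with $1 \le r < p^{n-1}$. Hence there exist $S \subseteq \{1,\ldots,p^{n-1}-1\}$ and $K \subseteq \{1, \ldots, p-1\}$ with $\mathcal{O} = \bigcup_{r \in S} R(r) \cup \{kp^{n-1} : k \in K\}$. Substituting this decomposition and using $\sum_{i \in R(r)} \zeta^i = 0$, the hypothesis collapses to
\[ \sum_{k \in K} \zeta^{kp^{n-1}} = p\sum_{r \in S} \omega^r + |K|. \]
Writing $\zeta^{p^{n-1}} = \omega^c$ with $\gcd(c,p)=1$, setting $K' = \{ck \bmod p : k \in K\}$ (so that $|K'|=|K|$) and $s_i = |\{r \in S : r \equiv i \pmod p\}|$, both sides become $\Z$-combinations of $\{1, \omega, \ldots, \omega^{p-1}\}$. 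Since such a combination vanishes if and only if all coefficients agree (the minimal polynomial of $\omega$ is $1 + X + \cdots + X^{p-1}$), coefficient comparison yields the integer system
\[ p(s_i - s_0) = [i \in K'] + |K| \quad \text{for } i = 1, \ldots, p-1. \]

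Finally I would finish with case analysis. The right-hand side of that system lies in $\{|K|, |K|+1\}$ while the left-hand side is a multiple of $p$; since $0 \le |K| \le p-1$ this forces $|K| \in \{0, p-1\}$. If $|K| = 0$, then $K = \emptyset$ and $s_i = s_0$ for all $i$, which exhibits $\mathcal{O}$ as a null set with parameter $s = s_0$: case (i). If $|K| = p-1$, then $K = \{1, \ldots, p-1\}$ (so $\mathcal{O} \cap T = T$, where $T = \{p^{n-1}, \ldots, (p-1)p^{n-1}\}$) and $s_i = s_0 + 1$ for $i \ge 1$; choosing the `extra' element of $S$ in each non-zero residue class modulo $p$ as $r_i$ and letting $Z$ be the union of $R(r)$ for the remaining $r \in S$ (a null set with parameter $s_0$) recovers the decomposition in case (ii). The only real technical care lies in the bookkeeping of the reduction step, where one must pass cleanly from an equation mixing the two primitive $p$-th roots $\zeta^{p^{n-1}}$ and $\omega$ to a single identity in $\Z[\omega]$; the substitution $K \mapsto K'$ handles this painlessly.
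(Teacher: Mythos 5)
Your proof is correct and uses exactly the ingredients the paper's (very brief) proof sketch names: Lemma~\ref{lemma:cyclotomic} to show $\mathcal{O}\setminus\{p^{n-1},\ldots,(p-1)p^{n-1}\}$ is a union of sets $R(r)$, the sums \eqref{eq:zetasum} and \eqref{eq:zetasum2}, and the minimal polynomial of $\omega$ to compare coefficients. The resulting integer system and the dichotomy $|K|\in\{0,p-1\}$ is a clean and complete way to finish, matching the paper's intended argument in full detail.
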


The proof is similar to the inductive step in \S\ref{sec:main}, 
using~\eqref{eq:zetasum} and~\eqref{eq:zetasum2} to show
that if $Z$ is null then $\sum_{i \in Z}\xi^i = \sum_{i \in Z}\omega^i = 0$,
and Lemma~\ref{lemma:cyclotomic} to show that $\mathcal{O} \backslash \{p^{n-1}, \ldots, (p-1)p^{n-1} \}$
is a union of the sets $R(r)$.
Note that since $r_{01} \equiv 0$ mod $p$,
and $1 \le r_{01} < p^{n-1}$, Case (i) is relevant only when $n \ge 3$. The 
smallest possible $\mathcal{O}$ has size $p^2-1$, coming from Case (ii); this shows
Burnside's claim is false whenever $p \ge 3$ or $n \ge 3$. The lack
of structure in the solutions, beyond that captured by the sets $R(r)$, 
suggests that any fix to Burnside's proof must involve significant further ideas.

\subsection{Burnside's 1921 paper}\label{sec:Burnside1921}
In \cite{Burnside1921}, Burnside claimed a `remarkably simple' proof that 
every abelian group 
 that is not elementary abelian is a B-group, as conjectured at the end of 
 \S 252 of \cite{BurnsideBook1911}. (Of course Burnside
did not use the term `B-group'.) The groups $S_\dB \wr S_2$ in their
primitive action for $\dB$ composite, seen in Example~\ref{ex:prim} below, 
show that this result is false.
In \cite[\S 15]{DManning1936}, D.~Manning raised this family 
of counterexamples and observed `the first
and most important part of the proof must contain a serious mistake'.

In today's language, Burnside considers a permutation group $G$ of degree $\dB\dB'$
acting on $\{0,\ldots, \dB-1\} \times \{0,\ldots, \dB'-1\}$,
containing a regular subgroup
$K = \langle g_\dB \rangle \times \langle g'_{\dB'} \rangle$ where $g_\dB = (0,1,\ldots, d-1)$
and \hbox{$g'_{\dB'} = (0,1,\ldots,d'-1)$}. The natural $\C G$-permutation module $M$  factorizes
on restriction to $K$
as $\langle e_0, \ldots, e_{\dB-1} \rangle \otimes \langle e'_0, \ldots, e'_{\dB'-1} \rangle$.
Let $\zeta_\dB$, $\zeta_{\dB'} \in \C$ be primitive roots of unity of orders $\dB$ and~$\dB'$,
respectively.
The analogue of the $v_j$ basis element defined earlier in~\eqref{eq:v} is
\[ v_{(j,j')} = \sum_{0 \le i < \dB} \zeta_d^{-ij}e_i \otimes \sum_{0 \le i' < \dB'}
\zeta_{d'}^{-i'j'}
e'_{i'}
\]
where $0 \le j <\dB$ and $0\le j' < \dB'$. 
As before, $M$ has a unique decomposition $\langle v_{(0,0)} \rangle
\oplus V_1 \oplus \cdots \oplus V_t$ where each irreducible summand $V_k$ has a basis
$\{ v_{(j,j')} : (j,j') \in B_k \}$ for some subset $B_k$ of $\{0,\ldots, \dB-1\} \times 
\{0,\ldots, \dB'-1\}$. Let $\phi_k$ be the character of $V_k$.
The analogue of Lemma~\ref{lemma:firstStep}
is that if $\mathcal{O}$ is an orbit of the point stabiliser
$H$ of $(0,0)$, and $1 \le k \le t$ then 
\begin{equation}\label{eq:BurnsideCO}
\sum_{(i,i') \in \mathcal{O}} \zeta_\dB^{ij} \zeta_{\dB'}^{i'j'} \end{equation} 
is 
constant for $(j,j') \in B_k$. Burnside proves this, and also proves (in a similar way)
the dual relation that the character value 
$\smash{\phi_k(g_\dB^ig_{\dB'}^{\prime i'})=
 \sum_{(j,j') \in B_k} \zeta_\dB^{ij} \zeta_{\dB'}^{i'j'}}$ 
is 
constant for $(i,i') \in \mathcal{O}$. Hence
\begin{align} 
\sum_{(i,i') \in \mathcal{O}} \sum_{(j,j') \in B_k} \zeta_\dB^{ij} \zeta_{\dB'}^{i'j'}
&= |B_k| \sum_{(i,i') \in \mathcal{O}} \zeta_\dB^{ij} \zeta_{\dB'}^{i'j'}\label{eq:Burnside1} \\
&= |\mathcal{O}| \sum_{(j,j') \in B_k} \zeta_\dB^{ij} \zeta_{\dB'}^{i'j'}\label{eq:Burnside2}
\end{align}
provided  $(j,j') \in B_k$ in the right-hand side of~\eqref{eq:Burnside1} and $(i,i') \in \mathcal{O}$
in the right-hand side of~\eqref{eq:Burnside2}.
Burnside chooses $B_k$ to contain $(\dB/q, 0)$
where $q$ is a prime factor of $\dB$ and $\mathcal{O}$ to contain $(1,0)$.
By taking $(j,j') = (\dB/q,0)$ in~\eqref{eq:Burnside1} and $(i,i') = (1,0)$ in~\eqref{eq:Burnside2} he obtains
$|B_k| \sum_{(i,i') \in \mathcal{O}} \zeta_d^{id/q} =
|\mathcal{O}| \sum_{(j,j') \in B_k} \zeta_d^j = |\mathcal{O}|\phi_k(g_d)$, 
and so
\begin{equation}\label{eq:Burnside4}
\phi_k(g_d) = \frac{|B_k|}{|\mathcal{O}|} \sum_{(i,i') \in \mathcal{O}} \omega^{i} \end{equation}
where $\omega = \zeta_d^{d/q}$ is a primitive root of unity of order $q$. 

The
fourth displayed equation on page 484 of \cite{Burnside1921} claims that $\phi_k(g_\dB^q) = |\mathcal{B}_k|$,
and so $g_\dB^q$ is in the kernel of $\phi_k$.
It appears that Burnside substitutes $g_\dB^q$ for~$g_\dB$
in~\eqref{eq:Burnside4}, and replaces $\omega$ with $\omega^q$.
If~\eqref{eq:Burnside4} expressed
~$\phi_k(g_\dB)$ as a sum of 
eigenvalues, as in~\eqref{eq:Burnside2}, this would be legitimate.
However this is not the case, and the following example shows that
Burnside's claim is in general false.

\begin{example}\label{ex:prim}
Let $\dB \in \N$. Let $S$ be the symmetric group on the set $\{0,1,\ldots,\dB-1\}$.
Let $N = S \times S$ and let $G \cong S \wr C_2$ 
be the wreath product $N \rtimes \langle \tau \rangle$
where $\tau$ has order $2$ and acts on $N$ by $(g,g')^\tau = (g',g)$.
In the  action of $G$ on $\{0,1,\ldots, \dB-1\}^2$, the point
stabiliser $H$ of~$(0,0)$, namely $(T \times T) \rtimes \tau$
where $T$ is the symmetric group on $\{1,\ldots, \dB-1\}$, 
has
two non-singleton orbits: $\bigl\{(j,0), (0,j) : 1 \le j < \dB \bigr\}$
and $\bigl\{ (j,j') : 1 \le j, j' < \dB \bigr\}$. Therefore~$G$ is not $2$-transitive.
Provided $\dB \ge 3$, $H$ is a maximal subgroup of $G$, so~$G$ is primitive.
Let $g_\dB = g_\dB' = (0,1,\ldots, \dB-1)$. 
Since $\langle g_\dB \rangle \times \langle g'_\dB \rangle \le N$ 
acts regularly, $C_\dB \times C_\dB$ is not a $B$-group whenever $\dB \ge 3$.

Let $\dB \ge 3$. The natural permutation character of $S$ is $1_S + \chi$ 
where $\chi$ is irreducible. By the branching rule (see \cite[Ch.~9]{James} or
\cite[Lemma~2.3.10]{JK}),
$\chi$ is the unique non-trivial character of~$S$ 
whose restriction to $T$ 
contains the trivial character. By
 the classification of irreducible characters of wreath products
\cite[Theorem 4.3.34]{JK}, it follows that
the irreducible characters of~$G$ that contain the trivial character
on restriction to $H$ are  $1_G$, $\phi$ and 
$\smash{\chi^{\widetilde{\times 2}}}$,
where $\phi = (\chi \times 1_{S}) 
\!\ind_{N}^{\hskip0.5pt G}$ and
 $\smash{\chi^{\widetilde{\times 2}}}$ is the
unique character of $G$ whose restriction
to $N$ is $\chi \times \chi$. By Frobenius reciprocity,
the permutation character of $M$ is 
$1_G + \phi  
+ \smash{\chi^{\widetilde{\times 2}}}$.
Considering restrictions to $\langle g_\dB \rangle \times \langle g'_{\dB} \rangle$, we get
$M = \langle v_{(0,0)} \rangle \oplus \langle v_{(j,0)}, v_{(0,j')} : 1 \le j < \dB, 1 \le j' < \dB
\rangle \oplus \langle v_{(j,j')} : 1 \le j, j' < \dB \rangle$. The second 
summand has  character $\phi$ and contains $v_{(1,0)}$ and $v_{(0,1)}$, so is a faithful $\C G$-module.
Thus, contrary to Burnside's claim, no
non-identity power of~$g_\dB$ is in the kernel of $\phi$.
Burnside's conclusion, that $G$ has a proper
normal subgroup containing~$g_\dB^q$ holds, since we may take the base group $N$,
but clearly Burnside intends the normal subgroup to be the kernel of $\phi$,
so that Proposition~\ref{prop:imprimitive} can be applied,
and
the kernel of $\phi$ is trivial.
\end{example}

The penultimate paragraph of Burnside's paper
considers the case where~$\dB$ and $\dB'$ are distinct primes. 
This is the hardest part of the paper to interpret: the claims are  correct,
but the argument has 
a significant gap.
Burnside has already assumed that $G$ is not
$2$-transitive.
If a basis set 
$B_k$ is contained in $\{(1,0),\ldots, (\dB-1,0)\}$ then, identifying $(j,j')$ with
$\dB'\! j + \dB j'$ mod $\dB\dB'$,
Proposition~\ref{prop:imprimitive} implies that $G$ has a normal intransitive subgroup $N$
containing $\langle g_\dB \rangle$. 
This gives the first of Burnside's claims. While not stated explicitly, it seems
that Burnside then assumes, as he may, 
that no $B_k$ is contained in $\{(1,0),\ldots,(d-1,0)\}$. He makes two further claims,
equivalent to the following:
\begin{itemize}
\item[(i)] If $B_k$ meets $\{(1,0),\ldots, (d-1,0)\}$ then
$B_k$ is a union of sets
of the form $\big\{ (j,0), (j,1), \ldots, (j,\dB'-1) \bigr\}$ where $1 \le j < \dB$. 
\item[(ii)] there is a set $B_\ell$ contained in $\{(0,1), \ldots, (0,\dB'-1)\}$.
\end{itemize}
Clearly (i) implies (ii), and by Proposition~\ref{prop:imprimitive}, (ii) implies that
$G$ has a normal intransitive subgroup $N$
containing $\langle g'_{\dB'} \rangle$. 
To prove (i), we use 
the italicised conclusion of the second paragraph in the proof
of Theorem~\ref{thm:mainAlt} in \S\ref{sec:mainAlt}: taking $p = \dB'$, this implies that $B_k$ is the union
of a subset of $\{(0,1),\ldots,(0,\dB'-1)\}$ and some sets of the required form.
Since
 $[\Q(\zeta_{dd'}) : \Q(\zeta_{d})] = \phi(dd')/\phi(d) =
\phi(d') 
= [\Q(\zeta_{d'}) : \Q]$,
the stabiliser of $\zeta_{\dB}$ in the Galois group $\Gal(\Q(\zeta_{\dB\dB'}):\Q)$ acts transitively on the roots 
$\zeta_{\dB'}, \ldots, \zeta_{\dB'}^{\dB'-1}$.
By the hypothesis in~(i) there exists $(j,0) \in B_k$.
For each $r'$ such that $1 \le r' < \dB'$ there exists
$\sigma' \in \Gal(\Q(\zeta_{\dB\dB'}):\Q)$ such that
$\zeta_{\dB}^{\s'} = \zeta_{\dB}$ and $\smash{\zeta_{\dB'}^{\s'} = \zeta_{\dB'}^{r'}}$.
Since $\smash{v_{(j,0)}^{\s'} = v_{(j,0)}}$ and $v_{(0,1)}^{\s'} = v_{(0,r')}$, we see
that if $B_k$ meets $\{(0,1),\ldots,(0,\dB'-1)\}$ then it contains this set; a similar argument,
taking $\s \in  \Gal(\Q(\zeta_{\dB\dB'}): \Q)$ such that
 $\zeta_\dB^{\s} = \zeta_\dB^r$ and $\zeta_{\dB'}^{\s} = \zeta_{\dB'}$
now shows that
$B_k = \{0,\ldots,\dB-1\} \times \{0,\ldots,\dB'-1\}
\backslash \{(0,0)\}$, and so $G$ is $2$-transitive, contrary to assumption. Therefore (i) holds.

Having proved (i), we instead
follow Burnside's argument for (i) and~(ii). Burnside chooses~$\mathcal{O}$ 
to contain $(1,1)$ and takes $(m,0) \in B_k$.
By~\eqref{eq:Burnside1} and~\eqref{eq:Burnside2}, 
$ |\mathcal{O}| \sum_{j'=0}^{\dB'-1} c_{j'}\zeta_{\dB'}^{j'} 
= |B_k| 
\sum_{(i,i') \in \mathcal{O}} \zeta_\dB^{i m}$,
where $\smash{c_{j'} = \sum_{j : (j,j') \in B_k} \zeta_{\dB}^{j}}$ for $j' \in \{0,1,\ldots,\dB'-1\}$.
According to Burnside, this implies that the coefficients
$c_{j'}$ are constant for all $j'$. 
It appears that Burnside assumes that every rational relation between the powers of $\zeta_{\dB'}$
is a multiple of $1+ \zeta_{\dB'} + \cdots + \zeta_{\dB'}^{\dB'-1}$. But  
a more general relation is
$a + \zeta_{\dB'} + \cdots + \zeta_{\dB'}^{\dB'-1} = a-1$, 
so we can only conclude that
the $c_{j'}$ are constant for $j' \in \{1,\ldots, \dB'-1\}$. However, it \emph{is} true that
if $\sum_{j \in J} \zeta_{\dB}^{j} = \sum_{j \in K} \zeta_{\dB}^{j}$ 
for non-empty sets $J$, $K \subseteq \{0,1,\ldots, \dB-1\}$ then $J = K$, so this weaker
conclusion implies that, for each $j' \in \{1,\ldots,\dB'-1\}$, either $\{j : (j,j') \in B_k \} \supseteq \{1,\ldots, \dB-1\}$
or $\{j : (j,j') \in B_k \} \subseteq \{0\}$. Hence
\begin{itemize}
\item[(i)${}'$] If $B_k$ meets $\{(1,0),\ldots,(\dB-1,0)\}$ then
$B_k$ is a union of sets of the form $\{(j,0)\}$ and 
$\{(j,1), \ldots, (j,\dB'-1)\}$ where $1 \le j < \dB$.
\end{itemize}
The Galois action of the automorphisms $\sigma$ in our proof of (i)
shows that (i)${}'$ implies (ii). Therefore Burnside's argument can be corrected.

The final sentence of the paragraph we have been reading is `It is clear that the same method of proof
will apply, when the transitive Abelian subgroup has three or more independent generators'.
Taking $d=4$ in Example~\ref{ex:prim}, we see that the subgroup 
$\langle (0,1,2,3) \rangle \times \langle (0,1)(2,3), (0,2)(1,3)\rangle  \le G$
acts regularly in the primitive action of $G$ on $\{0,1,2,3\}^2$. 
Therefore $C_4 \times C_2 \times C_2$ is not a B-group and Burnside's claim is false.
The use of the Galois action in
the previous paragraph required that both $\dB$ and $\dB'$ are prime.

In  \S\ref{sec:BurnsideEven} below we
extend the correct part of Burnside's 
proof to show that if $p$ is an odd prime
and $n \in \N$ then $C_{2^n}$, $C_{2^n p}$ and $C_{2p^n}$ are $B$-groups.
A proof of Conjecture~\ref{conj:coprime} will
rehabilitate Burnside's method for cyclic groups.

\subsection{Manning's 1936 paper}\label{subsec:Manning}
In \cite{DManning1936}, D.~Manning claimed a proof, using Burnside's method,
that if $p$ is prime and $a > b$
then $C_{p^a} \times C_{p^b}$ is a B-group. 
It is reported in \cite[page~67]{WielandtBook1964} that she later acknowledged that
the critical Lemma II in \cite{DManning1936} is false. We extend Example~\ref{ex:prim} to show this.

\begin{example}\label{ex:Manning}
Recall from Example~\ref{ex:prim} that $S$ is the symmetric group on $\{0,1,\ldots, \dB-1\}$ and
 $G \cong S \wr C_2$ acting
primitively on $\{0,1,\ldots, \dB-1\}^2$. We took $g_\dB = g'_\dB = (0,1,\ldots, \dB-1)$.
By Example~\ref{ex:prim}, the natural $\C G$-permutation module 
has a summand with 
basis set $B = \{ (j,0), (0,j')  : 1 \le j < \dB, 1 \le j' < \dB \}$, with
respect to the chosen generators $(g_\dB,1)$ and $(1,g'_\dB)$ of the
regular subgroup~$K =  \langle g_\dB \rangle \times \langle g'_\dB \rangle$.

We have
\begin{align*} v_{(j,0)} (g_\dB,1) &= \zeta^j v_{(j,0)}, \quad v_{(0,j')} (g_\dB,1) = v_{(0,j')}, \\
v_{(j,0)} (g_\dB,g'_\dB) &= \zeta^j v_{(j,0)}, \quad v_{(0,j')} (g_\dB,g'_\dB) = \zeta^{j'} v_{(0,j')}.
\end{align*}
Therefore, with respect to the alternative generators $(g_\dB,1)$ and $(g_\dB,g'_\dB)$ of~$K$, the basis set 
becomes
\hbox{$C = \{ (j,j) : 1 \le j < \dB \} \hskip0.5pt\cup\hskip0.5pt  \{ (0,j') : 1 \le j' < \dB \}$}.
Observe that, as it must be, 
$C$ is invariant under the action induced by \hbox{$\Gal(\Q(\zeta_\dB) : \Q)$}.
Manning's Lemma II asserts the stronger property that, given any $(i,i') \in \{0,1,\ldots, \dB-1\}^2$
with $i$ and $i'$ coprime to $\dB$,  $C$ is invariant under the permutation
$(j,j') \mapsto (ij,i'j')$, where the entries are taken modulo~$\dB$. Taking $i = 1$ and $i'=-1$
we see that this is false whenever $\dB > 2$.
\end{example}

\subsection{Later proofs of Burnside's and Manning's claims}
In 1908, Schur  introduced his method of S-rings and gave the first
correct proof of Theorem~1 \cite{Schur1908}. 
In 1933 Schur extended his method to prove, more generally, that
any cyclic group of composite order is a B-group. As remarked in \cite{DManning1936}, 
it appears that Schur was unaware of Burnside's 1921 paper.
In 1935,
Wielandt wrote 
`Der von Herrn Schur angegebene Beweis ist recht schwerig',
and gave a short proof of the still more general result that 
any abelian group of composite order 
having a cyclic Sylow $p$-subgroup for some prime $p$ is a B-group \cite{Wielandt1936}.
Wielandt's proof depends on several results on S-rings, in particular property (6) in \cite{Wielandt1936},
that the stabiliser of an element of an S-ring is itself in the ring.
Wielandt's result
 and proof appear, in translation but essentially unchanged, in his 1964 textbook \cite[Theorem~25.4]{WielandtBook1964}.
The use of complex conjugation
at the end of the proof of Theorem~\ref{thm:mainAlt}  in~\S\ref{sec:mainAlt} involves some similar ideas to 
the proof of property (6) in \cite[Theorem~23.5]{WielandtBook1964}, 
 but the proof here is substantially shorter and more elementary. 
 
The first essentially correct proof of the result claimed by D.~Manning was given
by Kochend{\"o}rffer in 1937 using S-rings \cite{Kochendorffer}; 
Wielandt comments in \cite{WielandtBook1964} that it is `very complicated' (Bercov's translation). 
In his essay in~\cite{WielandtWerkeI}, Neumann reports that in an unpublished note D.~Manning found some slips in
\cite{Kochendorffer}, but was able to correct them.
Neumann's essay includes a proof of Theorem~\ref{thm:main} that
a reader, familiar with the prerequisites from modular representation theory and permutation groups,
will find spectacularly short and beautiful.

Apart from \cite{Knapp}, outlined in the introduction, the three papers
\hbox{\cite{Burnside1901, Burnside1921, DManning1936}}
surveyed in this section appear to exhaust the research literature on Burnside's method.
It is intriguing that all err in ultimately the same way, by overlooking algebraic relations
satisfied by roots of unity.

\subsection{Burnside's method in even degree}\label{sec:BurnsideEven}
Again we continue from the end of~\S\ref{sec:preliminaries}.
There is an action of the Galois group $\Gal(\Q(\zeta_{d}) : \Q)$
on the set $\{1,\ldots, d-1\}$ 
under which $\sigma \in \Gal(\Q(\zeta_{d}) : \Q)$ sends
$i$ to $i'$ if and only if $\sigma$ sends $\zeta^i$ to $\zeta^{i'}$.
In \cite[Theorem 2.3(2)]{Knapp}, Knapp extends Burnside's arguments to show that 
this action  induces an action of the Galois group on the orbits of the point stabiliser $H$.
 (This result may also be proved
using S-rings: see \cite[Theorem 23.9]{WielandtBook1964}.)
Let~$D$ be the set of divisors of $d$. Set $\R(1) = \{0\}$ and for $r \in D$ with $r > 1$, set
\[ \R(r) = \{m d/r : 0 < m < r, 
\hcf(m, r) = 1 \}. \]
Thus for each $r \in D$ the set $\{ \zeta_d^i : i \in \R(r)\}$, consisting of all primitive $r$-th
roots of unity, is an orbit of the Galois group on the powers of $\zeta_d$.
If $d$ is even then, since $\R(2) = \{d/2\}$ corresponds to $\zeta_d^{d/2} = -1 \in \Q$,
the $H$-orbit $\O$ containing $d/2$
 is invariant under the Galois action. Hence
$\mathcal{O} = \bigcup_{r \in \X} \R(r)$
for some subset $\X$ of $D$. Observe that $G$ is $2$-transitive
if and only if $\X = D \backslash \{1\}$. 

For $r \in D$ and $j \in \N$ we have
$\sum_{i \in \R(r)} \zeta_{d}^{ij} = \sum_\alpha \alpha^j$
where $\alpha$ ranges over all primitive $r$-th roots of unity.
If $\hcf(r,j) = j^\star$ then the map
$\alpha \mapsto \alpha^j$ is $j^\star$ to~$1$, and each $\alpha^j$
is a primitive $r/j^\star$-th root of unity.
It is well known that the sum of the $\phi(s)$ roots of unity of order $s$ is $\mu(s)$,
 where~$\mu$ is the M{\"o}bius function 
(see for instance \cite[Exercise 2.8]{WashingtonCyclotomic}).
Therefore, if~$R$ is the matrix with rows and columns
indexed by $D$, defined by
\begin{equation}
\label{eq:Rsum} R_{rc} = \mu\Bigl( \frac{r}{\hcf(r,c)}\Bigr) \frac{\phi(r)}{\phi(\frac{r}{\hcf(r,c)})}
\end{equation}
then, for any $r \in D$ and $j \in \N$,
\begin{equation}
\label{eq:Rsums} 
\sum_{i \in \R(r)} \zeta_d^{ij} = R_{rc} \quad\text{where $c = \hcf(d,j)$.} 
\end{equation}
(Here $R$ stands for Ramanujan, who considered these
cyclotomic sums in~\cite{Ramanujan1918}; this was published in the interval between Burnside's 1901 and 1921
papers,
but there is no evidence that Burnside was aware of its relevance.)
As an \emph{aide-memoire}, 
we note that $R_{rc}$ is defined by taking $c$-th powers of
$r$-th roots of unity. An example of these matrices is given after Lemma~\ref{lemma:pMatrix}.

Let $\sim_\X$ be the relation on $D \backslash \{d\}$ defined by
\begin{equation}\label{eq:sim} b \sim_\X c \iff \sum_{r \in \X} R_{rb} = \sum_{r \in \X} R_{rc}. \end{equation}
Let $\P_\X$ be the set of equivalence classes of $\sim_\X$.
Given  $B \subseteq \{0,1,\ldots,d-1\}$, 
let $Y(B) = \{ c \in D \backslash \{d\} : B \cap \R(d/c) \not= \varnothing \}$.
For example, $1 \in Y(B)$ if and only if $B$ contains a number coprime to $d$,
and \hbox{$Y(\{0\}) = \varnothing$}.
If $B_k$ and $B_\ell$ are distinct basis sets then necessarily
$B_k \hskip1pt\cap\hskip1pt B_\ell = \varnothing$, but if neither $B_k$ nor $B_\ell$ is invariant under the Galois action,
we may still have $Y(B_k) \cap Y(B_\ell) \not= \varnothing$. However the asymmetry
between orbits and basis sets in the conclusion of Lemma~\ref{lemma:firstStep} works
in our favour, to show that  
$\sum_{r \in \X} R_{rc}$
is constant for $c \in Y(B_k)$. 
It follows that  $Y(B_k)$
is contained in a single part of the partition $\P_\X$ of $D \backslash \{d\}$.
Hence, by Proposition~\ref{prop:imprimitive}, we may assume that
the highest common factor of the entries in each part of the partition $\P_\X$ of
$D \backslash \{d\}$ is $1$.
We say that such partitions are \emph{coprime}.

For $c \in D$,  an easy calculation from~\eqref{eq:Rsums} shows that

\vspace*{-13pt}
\[ \sum_{r \in D} R_{rc} = 
 \sum_{i=0}^{d-1} \zeta_d^{ic} = c\sum_{i=0}^{d/c-1} \zeta_{d/c}^i = \begin{cases}
0 & \text{if $c < d$,} \\ d & \text{if $c = d$.} \end{cases} \]
Since $R_{1c} = 1$ for all $c \in D$, it follows
that if $\X = D \backslash \{1\}$ then $\P_\X = \bigl\{ D \backslash \{d\} \bigr\}$. 
This proves the `only if' direction of the following conjecture.

\begin{conjecture}\label{conj:coprime}
Let $\X \subseteq D$ contain $2$.  The partition $\P_\X$ of $D \backslash \{d\}$ defined by
the relation $\sim_\X$ in~\eqref{eq:sim} is coprime
if and only if $\X = D \backslash \{1\}$ or $\X=D$.
\end{conjecture}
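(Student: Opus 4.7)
The preceding paragraph establishes the `only if' direction: when $\X \in \{D, D \setminus \{1\}\}$ the sum $\sum_{r \in \X} R_{rc}$ is constant on $D \setminus \{d\}$, so $\P_\X$ is the one-part partition, which is coprime since $1 \in D \setminus \{d\}$. The substantive task is the `if' direction, which I recast as its contrapositive: whenever $2 \in \X \subseteq D$ and $\X \notin \{D, D \setminus \{1\}\}$, some class $P \in \P_\X$ consists entirely of multiples of a single prime $p \mid d$.

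The plan is to exploit two structural features of Ramanujan sums visible in~\eqref{eq:Rsum}. First, $R_{r,c}$ depends only on the pair $(r, \gcd(r,c))$, so in particular only on the tuple of valuations $(v_p(r), v_p(c))_{p \mid d}$; second, $R_{r,c}$ is multiplicative in~$r$, in the sense that $R_{r_1 r_2, c} = R_{r_1, c} R_{r_2, c}$ whenever $\gcd(r_1, r_2) = 1$. Writing $d = \prod_p p^{n_p}$, these properties localise the function $f(c) = \sum_{r \in \X} R_{rc}$ at each prime. In the model case where $\X$ is itself a product $\prod_p \X_p$ with $\X_p \subseteq \{0, 1, \ldots, n_p\}$, one has $f(c) = \prod_p g_p(v_p(c))$ where $g_p(e) = \sum_{a \in \X_p} R_{p^a, p^e}$, and the explicit formulae $R_{p^a, p^e} = \phi(p^a)$ for $e \ge a$, $R_{p^a, p^e} = -p^{a-1}$ for $e = a - 1$ and $R_{p^a, p^e} = 0$ otherwise determine the behaviour of each~$g_p$ and hence of~$f$ exactly.

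For the general (not necessarily product) $\X$, I would search for a distinguished witness $c_0 \in D \setminus \{d\}$ whose $\sim_\X$-class is contained in the multiples of some prime $p \mid d$. A natural candidate is $c_0 = d/q$ for a prime $q \mid d$, because $R_{r, d/q} = \phi(r)$ when $v_q(r) < n_q$ and $R_{r, d/q} = -\phi(r)/(q-1)$ when $v_q(r) = n_q$; thus $\sum_{r \in \X} R_{r, d/q}$ splits into a main term $\sum_{r \in \X} \phi(r)$ and a correction coming only from those $r \in \X$ of maximal $q$-adic valuation. Comparing this with $\sum_{r \in \X} R_{r, c'}$ for $q$-free divisors $c' \neq d$ should, after case analysis on which maximal-valuation divisors lie in $\X$, either exhibit the required separating prime or reduce the problem to a smaller~$d$.

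The main obstacle is combinatorial: once $d$ has three or more distinct prime divisors, both the family of eligible $\X$ and the possible partitions $\P_\X$ proliferate, and the $\mu$- and $\phi$-factors in~\eqref{eq:Rsum} produce delicate cancellations that can obstruct a clean separation. Since the author records this as a conjecture rather than a theorem, a complete proof likely requires a genuinely new structural idea. A reasonable first target is the squarefree case, where the divisor lattice of~$d$ is Boolean and each $R_{r,c}$ is a product of values in $\{-1\} \cup \{\phi(p) : p \mid d\}$; the case where $d$ is a product of two prime powers would then be the natural next step.
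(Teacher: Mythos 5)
The statement you are being asked about is recorded in the paper as an \emph{open conjecture}: the paper establishes only the easy direction (that $\X = D\setminus\{1\}$ or $\X = D$ forces $\P_\X$ to be the one-part partition, hence coprime), together with the three special families $d = 2^n$, $d = 2^n p$ and $d = 2p^n$ in Proposition~\ref{prop:conj}, and the author explicitly states that no proof in significantly greater generality is known (the conjecture is verified computationally for $d \le 600$). Your proposal reproduces the easy direction correctly, and your supporting computations are accurate: the values $R_{r,d/q} = \phi(r)$ when the $q$-adic valuation of $r$ is not maximal and $R_{r,d/q} = -\phi(r)/(q-1)$ when it is, the prime-power formula, and the multiplicativity of $R_{rc}$ in $r$ all check out against \eqref{eq:Rsum} and Lemma~\ref{lemma:pMatrix}. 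But the substantive implication --- that $2 \in \X$ and $\X \notin \{D, D\setminus\{1\}\}$ force some part of $\P_\X$ to have a common prime factor --- is left entirely as a programme. The witness $c_0 = d/q$ is proposed but the ``case analysis on which maximal-valuation divisors lie in $\X$'' is never carried out, the localization $f(c) = \prod_p g_p(v_p(c))$ is only valid when $\X$ is a product set (which is exactly what one cannot assume), and even the squarefree and two-prime-power cases you name as first targets are not settled. So there is a genuine gap: nothing beyond the easy direction is actually proved.

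For what it is worth, the paper's own partial progress takes a somewhat different concrete route from your sketch. Proposition~\ref{prop:conj} works directly with the matrix $R(d)$ via the tensor factorization of Lemma~\ref{lemma:pMatrix}(ii), performs row operations (deleting a row known to lie in $\X$ and replacing another row by the sum, which tracks how membership of divisors in $\X$ propagates), recognises a scaled copy of a smaller Ramanujan matrix $R(d')$ inside the reduced matrix, and inducts on the degree; for $d = 2^n p$ it adds a parity argument on the entries, and for $d = 2p^n$ it reduces the entries modulo $p^{n}$ and $p^{n-1}$. If you want to push further, that induction-by-row-reduction template is the one technique the paper demonstrates actually closing cases, whereas your prime-by-prime localization has not yet been shown to close any.
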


We have shown that if $d$ is even then,
defining $\X$ as above by the orbit~$\mathcal{O}$ containing
$d/2$, the `if' direction of Conjecture~\ref{conj:coprime}
implies that $\X = D \backslash \{1\}$ and $\mathcal{O} = \{1,\ldots, d-1\}$, and so
$C_d$ is a $B$-group.

The following lemma can be used to prove Conjecture~\ref{conj:coprime} in several
 cases of interest. Let $R(d)$ denote the Ramanujan matrix defined for degree $d$.


\begin{lemma}{\ }\label{lemma:pMatrix}
\begin{thmlist}
\item Let $p$ be prime and let $n \in \N$. We have 
\[ R(p^n)_{p^ep^f} = \begin{cases} 0 & \text{if $f < e-1$,} \\
-p^{e-1} & \text{if $f = e-1$,} \\
(p-1)p^{e-1} & \text{if $f \ge e$.} \end{cases}\]
\item Let $p_1,\ldots, p_s$ be distinct primes and let $n_1, \ldots, n_s \in \N$. We have
$R(d) = R(p_1^{n_1}) \otimes \cdots \otimes R(p_s^{n_s})$.
\end{thmlist}
\end{lemma}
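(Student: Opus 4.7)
The plan is to deduce both parts directly from the defining formula~\eqref{eq:Rsum} for $R_{rc}$, exploiting multiplicativity of $\mu$, $\phi$ and $\hcf$ on coprime arguments.

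For part~(i), I would set $r = p^e$, $c = p^f$ and compute $\hcf(r,c) = p^{\min(e,f)}$, so that $r/\hcf(r,c) = p^{\max(0,\,e-f)}$. The three cases in the statement correspond to the three possible behaviours of $\mu$ on this prime power: when $e - f \ge 2$ the exponent is at least~$2$ and $\mu$ vanishes; when $e - f = 1$ we get $\mu(p) = -1$ together with $\phi(p^e)/\phi(p) = p^{e-1}$, contributing $-p^{e-1}$; when $e \le f$ the argument of $\mu$ is~$1$ and the quotient of $\phi$'s is $\phi(p^e) = (p-1)p^{e-1}$ (read as $1$ in the edge case $e=0$). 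Substituting into~\eqref{eq:Rsum} gives the stated values.

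For part~(ii), I would label the divisors of $d = \prod_i p_i^{n_i}$ by tuples $(a_1,\ldots,a_s)$ with $0 \le a_i \le n_i$ via $r = \prod_i p_i^{a_i}$, and similarly $c$ by $(b_1,\ldots,b_s)$. Unique factorization gives $\hcf(r,c) = \prod_i p_i^{\min(a_i,b_i)}$ and hence $r/\hcf(r,c) = \prod_i p_i^{\max(0,\,a_i-b_i)}$. Since the factors $p_i^{\max(0,\,a_i-b_i)}$ are pairwise coprime, multiplicativity of $\mu$ and $\phi$ factors both the numerator and the denominator in~\eqref{eq:Rsum} as a product over $i$, producing
\[ R(d)_{rc} \;=\; \prod_{i=1}^s R(p_i^{n_i})_{p_i^{a_i},\,p_i^{b_i}}, \]
which is precisely the $(r,c)$-entry of the Kronecker product $R(p_1^{n_1}) \otimes \cdots \otimes R(p_s^{n_s})$ under the natural identification of divisors of $d$ with tuples of prime power divisors.

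Neither part poses any real obstacle; the lemma is essentially a calculation. The mildest points of care are the edge case $e = 0$ in~(i), where the expression $(p-1)p^{e-1}$ should be interpreted as $\phi(p^0) = 1$, and, in~(ii), checking that the chosen ordering of divisors used to index $R(d)$ is compatible with the lexicographic ordering implicit in the Kronecker product. Since any two orderings of divisors give the same matrix up to a simultaneous permutation of rows and columns, this last point is a matter of convention rather than substance.
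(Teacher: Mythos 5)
Your proposal is correct and follows essentially the same route as the paper: part (i) is the direct case analysis of $\mu(p^{\max(0,e-f)})$ in~\eqref{eq:Rsum}, and part (ii) is the multiplicativity of $\mu$, $\phi$ and $\hcf$ applied factor by factor (the paper phrases this as a reduction to two coprime factors, which is the same argument). Your remark about the edge case $e=0$, where the displayed formula should be read as $\phi(1)=1$ rather than $(p-1)p^{-1}$, is a fair observation consistent with the paper's later assertion that $R_{1c}=1$ for all $c$.
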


\begin{proof}
Part (i) is immediate from~\eqref{eq:Rsum}. For (ii), it suffices to show
that if $d$ and $d'$ are coprime and $r \mid d$, $r' \mid d'$ and $c \mid d$, $c' \mid d'$ then
the entry in row $rr'$ and column $cc'$ of $R(dd')$ is
$R_{rc}(d)R_{r'c'}(d')$. This follows from~\eqref{eq:Rsum} using the multiplicativity
of $\mu$ and $\phi$, noting that $\hcf(r,r') = \hcf(c,c') = 1$.
\end{proof}

For example, if $p$ is an odd prime then $R(2p^3)$ is as shown below,
with~$D$ ordered $1,3,9,27,2,6,18,54$ and row $2 \in \X$ highlighted.
The division indicates the tensor factorization $R(p^3) \otimes R(2)$. 
\[ \scalebox{0.8}{$\displaystyle \left( \begin{matrix}[cccc|cccc] 1 & 1 & 1 & \mbf{1}  & 1 & 1 & 1 & 1 \\
-1 & p-1 & p-1 & \mbf{p-1} & -1 & p-1 & p-1 & p-1 \\
0  & -p  & p(p-1) & \mbf{p(p-1)} & 0 & -p & p(p-1) & p(p-1) \\
0 & 0 & -p^2 & p^2(p-1) & 0 & 0 & -p^2 & p^2(p-1)\\ \hline
\mbfr{-1} & \mbfr{-1} & \mbfr{-1} & \mbfr{-1}  & \mbfr{1} & \mbfr{1} & \mbfr{1} & \mbfr{1} \\
1 & -(p-1) & -(p-1) & \mbf{-(p-1)} & -1 & p-1 & p-1 & p-1 \\
0  & p  & -p(p-1) & \mbf{-p(p-1)} & 0 & -p & p(p-1) & p(p-1) \\
0 & 0 & p^2 & \mbf{-p^2(p-1)} & 0 & 0 & -p^2 & p^2(p-1)
\end{matrix} \right)$} \]
In particular $R(p^3)$ appears as the top-left block.

\begin{proposition}\label{prop:conj}
Let $n \in \N$ and let $p$ be an odd prime.
Conjecture~\ref{conj:coprime} holds when 
\emph{(i)} $d=2^n$, \emph{(ii)} $d= 2^n p$ and \emph{(iii)} $d=2p^n$.
\end{proposition}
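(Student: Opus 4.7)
The plan is to compute $R(d)$ explicitly via Lemma~\ref{lemma:pMatrix} in each case and then analyse the partition $\P_\X$ through a combinatorial study of the column sums $S(c) = \sum_{r \in \X} R_{r,c}$ as linear functions of the indicator vector of $\X$.

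Case (i) is essentially immediate: with $d = 2^n$, the only prime dividing $d$ is $2$, so every part of a coprime partition must contain the unique element of $D \setminus \{d\}$ coprime to $2$, namely $1$. Hence $\P_\X$ must be the single class $\{D \setminus \{d\}\}$. Setting $S_f = \sum_{r \in \X} R_{r, 2^f}$, Lemma~\ref{lemma:pMatrix}(i) yields the telescoping identity
\[ S_{f+1} - S_f = 2^{f+1}\bigl([2^{f+1} \in \X] - [2^{f+2} \in \X]\bigr) \]
for $0 \le f \le n - 2$. The condition $S_0 = \cdots = S_{n-1}$ then forces all the indicators $[2 \in \X], [4 \in \X], \ldots, [2^n \in \X]$ to coincide, and with $2 \in \X$ this gives $\X = D$ or $\X = D \setminus \{1\}$.

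For cases (ii) and (iii), the tensor decomposition $R(d) = R(2^n) \otimes R(p)$ (respectively $R(2) \otimes R(p^n)$) from Lemma~\ref{lemma:pMatrix}(ii) splits each column sum into two slice-contributions indexed by the $p$-part (respectively $2$-part) of the divisor. Since $d$ has only two prime divisors and $D \setminus \{d\}$ contains only two elements coprime to one of them---namely $\{1, p\}$ in case (ii) and $\{1, 2\}$ in case (iii)---the partition $\P_\X$ has at most two parts. If $\P_\X$ is the trivial single-part partition, then the telescoping identity of case (i) applied to each slice, together with a single cross-slice equation linking the extremal column $2^n$ (respectively $p^n$) to the column $1$, forces $\X = D$ or $\X = D \setminus \{1\}$ by essentially the same argument as in case (i).

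The main obstacle is ruling out the two-part case $\P_\X = \{P_1, P_*\}$ with $1 \in P_1$. My plan is to exploit that $S$ takes only two values on $D \setminus \{d\}$: this forces the slice-difference $B(f)$ (in case (ii)) or $V(g)$ (in case (iii)) to lie in $\{-|B(0)|, 0, |B(0)|\}$ for each valid argument, and since $B(0)$ is a difference of two $0/1$ indicators one has $|B(0)| \le 1$, so nontriviality of the partition gives $|B(0)| = 1$. A short recursion using the entries of $R(2^n)$ (respectively $R(p^n)$) then shows that the relevant slice of $\X$ is determined up to at most two options. Checking each option in turn, and combining with the constraint $2 \in \X$, shows that either $P_*$ consists entirely of multiples of a single prime---violating coprimality---or one obtains an equation of the form $A(1) - A(0) = p - 2$ on the other slice, which has no integer solution with $A(1) - A(0) \in \{-2, 0, 2\}$ when $p$ is odd. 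Either way a contradiction ensues, ruling out the two-part case and completing the proof.
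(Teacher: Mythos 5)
Your strategy is correct and genuinely different from the paper's, although as written it is a plan: the decisive computations in the two-part case and at the extremal column are asserted rather than carried out (I have checked that they do go through). The paper proceeds by induction on $n$: for $d=2^n$ it row-reduces $R(2^n)$ so that $2R(2^{n-1})$ appears as a submatrix; for $d=2^np$ it first uses the parity of the entries of the reduced matrix to show that $p$ and $2p$ lie in $\X$ together or not at all, and then reduces to degree $2^{n-1}p$; for $d=2p^n$ it reads the matrix modulo $p^n$ and $p^{n-1}$ and proves an auxiliary statement by induction. Your route replaces the induction by a direct analysis of the column sums: the tensor factorisation splits each sum into two slices, the telescoping identity controls consecutive differences within a slice, and the key structural observation --- that a \emph{coprime} partition of $D\setminus\{d\}$ has at most two parts, since every part must contain one of the (at most two) divisors coprime to a chosen prime factor of $d$ --- reduces everything to two cases. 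In the single-part case the cross-slice equation at the extremal column forces every indicator to equal $1$, because the telescoped sum attains its maximum value $2^n$ (resp.\ $p^n$) only then; in the two-part case the difference of the two values of the column sum is forced to be $\pm p$ (resp.\ $\pm 2$), the recursion pins the relevant slice of $\X$ down to at most four possibilities, and each is eliminated either because the part not containing $1$ consists entirely of multiples of a single prime (hence is not coprime) or because the required equation on the other slice fails for parity or size reasons when $p$ is odd. Both approaches rest on the same arithmetic input, namely Lemma~\ref{lemma:pMatrix} and the parity and $p$-divisibility of the matrix entries; yours is non-inductive and more uniform across the three cases, while the paper's is shorter because the row-reduction does the bookkeeping that your case analysis performs by hand. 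If you write this up, the steps needing genuine work are the extremal-column argument and the enumeration of the two-part subcases; everything else is a short verification.
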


\begin{proof}
The `if' direction remains to be proved. Recall that the
 rows and columns of $R$ are labelled by the divisors of $d$.
 Since row $1$ of $R(d)$
is constant, we may assume that $1 \in \X$. 

Suppose, as in (i), that $d = 2^n$. If $n=1$ then $\X = \{1,2\}$ and the conclusion is immediate.
Suppose that $n \ge 2$. 
Let~$R^\star$ be the matrix obtained from
$R(2^n)$ by deleting row
$1$ and replacing row $2$ with the sum of rows $1$ and~$2$.
Observe that column $1$ of $R^\star$ has all zero entries,
and the submatrix of~$R^\star$ formed by
columns $2^f$ for $1 \le f \le n$
is $2R(2^{n-1})$. Therefore $\sum_{r \in \X} R(2^n)_{rc} = \mfrac{1}{2} \sum_{r \in \X^\star} R(2^{n-1})_{rc}$
where $\X^\star = \{1\} \cup \{r/2 : r \in \X \backslash \{1,2\} \bigr\}$. 
By induction $\X^\star = \{1,2,\ldots, 2^{n-1} \}$, and so $\X = D$.

Part (ii) follows by a small extension of this argument.
Let $R^\star$ be as defined in (i). By Lemma~\ref{lemma:pMatrix}, the entry of $R^\star$ in row $r$ and column $c$ is
odd if and only if $r \in \{p,2p\}$ and $c = 2^m$ where $0 \le m \le n$.
Any coprime partition has a part
containing both $2^m$ and $p$ for some such $m$. Therefore, by parity, either both $p$ and $2p$ are
contained in $\X$, or neither are. Deleting row $p$ and replacing row $2p$ with the sum
of rows $p$ and $2p$ of $R^\star$, we obtain $2R(2^{n-1}p)$,
augmented by two zero columns. The inductive argument for~(i) now shows that $\X = D$.

Finally suppose that $d=2p^n$. Let $\overline{R}(2p^n)$ denote $R(2p^n)$ with
entries regarded as elements of $\Z/p^n\Z$.
Let $\simmod$ be the relation on $D\backslash \{2p^n\}$
defined as in~\eqref{eq:sim}, but working modulo $p^{n}$. Let $\overline{\mathcal{P}}_\X$
denote the set of equivalence classes for $\simmod$. We need this preliminary result: 
\emph{if $\overline{\mathcal{P}}_\X$ is
coprime then $2$, $2p$, \ldots, $2p^{n} \in \X$ and $\overline{\mathcal{P}}_\X$ has a single part.}
Again the proof is inductive. If $n = 1$ then, by
Lemma~\ref{lemma:pMatrix},
\[ \overline{R(2p)} = \left( \begin{matrix}[cc|cc] 1 & 1 & 1 & 1 \\
-1 & -1 & -1 & -1 \\ \hline
-1 & -1 & 1 & 1 \\
1  &  1 & -1 & -1
\end{matrix} \right) \]
where the entries are in $\Z/p\Z$ and~$D$ is ordered $1,p,2,2p$.
If $2p \not\in \X$ then, since $1$, $2 \in \X$, we have $\overline{\mathcal{P}}_\X = \bigl\{ \{1,p\}, \{2,2p\} \bigr\}$,
which is not coprime. Therefore $2p \in \X$ and $\overline{\mathcal{P}}_\X = \bigl\{ \{1,p,2,2p\} \bigr\}$, 
as required. Suppose that $n \ge 2$. Let $\overline{R}^\hstar$ denote
$R(2p^n)$ with the entries taken in $\Z/p^{n-1}\Z$. Observe that
columns $p^{n-1}$ and $p^n$ of $\overline{R}^\hstar$ are equal, as are columns $2p^{n-1}$
and $2p^n$. Moreover, rows $p^n$ and $2p^n$ have all  zero entries. 
By a very similar
inductive argument to (i), it follows that $\X$ contains
$2$, $2p$,\ldots, $2p^{n-1}$. Let $R^\star$ be the matrix
obtained from $R(p^n)$ by removing these rows, replacing row $2$ with their sum,
and adding $p^{e-1}$ to each entry in row $p^e$, for $1 \le e \le n$.
For example, if $n=3$ then
\[ R^\star = \scalebox{0.8}{$\displaystyle 
\left( \begin{matrix}[cccc|cccc] 1 & 1 & 1 & \mbf{1}  & 1 & 1 & 1 & 1 \\
0 & p & p & p & 0 & p & p & p \\
p  & 0  & p^2 & p^2 & p & 0 & p^2 & p^2 \\
p^2 & p^2 & 0 & p^3 & p^2 & p^2 & 0 & p^3\\ \hline
\mbfr{0} & \mbfr{0} & \mbfr{-p^2} & \mbfr{-p^2}  & \mbfr{0} & \mbfr{0} & \mbfr{p^2} & \mbfr{p^2} \\
0 & 0 & p^2 & \mbf{-p^2(p-1)} & 0 & 0 & -p^2 & p^2(p-1)
\end{matrix} \right)$} \]
where the row obtained by summation is highlighted. Since columns $1$ and~$2$ of $R^\star$ are equal, and any part
of a coprime partition of $D \backslash \{2p^n\}$ contains either $1$ or~$2$, we see that~$\overline{\mathcal{P}}_\X$ has
a single part. The column of $R^\star$ labelled $2p^{n-1}$ is
greater, entry-by-entry, than every other column, except in  rows $p^n$ and $2p^n$.
Since columns $p^{n-1}$ and $2p^{n-1}$ of $R^\star$ are congruent except in the summed row and row $2p^n$,
and the sum of entries in these columns is less than~$p^n$,
we have $2p^n \in \X$. This proves the preliminary result. 
%

We now prove (ii). Each part of $\overline{\mathcal{P}}_\X$ is a union of parts of $\mathcal{P}_\X$,
so $\overline{\mathcal{P}}_\X$ is coprime only if $\overline{\mathcal{P}}_\X$ is coprime.
By the preliminary result, $2$, $2p$, \ldots, $2p^{n} \in \X$.
Let $R^{\star\star}$ be the matrix defined as $R^\star$, but now adding all
the rows $2$, $2p$, \ldots, $2p^{n-1}$, $2p^n$. The non-zero entries
in the summed row for $R^{\star\star}$ are $-p^{n}$ in column $p^n$
and $p^{n}$ in column $2p^n$. Since $p^{n}$ is in a non-singleton part of $\mathcal{P}_\X$,
we see from column $p^n$ that
$\X$ contains $1, p, \ldots, p^n$, as required.
\end{proof}

Despite its elementary statement, 
the author has been unable to prove Conjecture~\ref{conj:coprime}
in any significantly greater generality. We offer this as an open problem.

The {\sc Haskell} \cite{Haskell98} program \texttt{RamanujanMatrix} on
the author's website\footnote{See \url{www.ma.rhul.ac.uk/~uvah099/}} has been used to verify
Conjecture~\ref{conj:coprime} for all 
degrees $d \le 600$. 
We mention that
\[ R(p^n) = \scalebox{0.8}{$\displaystyle \left( \begin{matrix} 1 & 0 & 0 & \ldots & 0 \\ 1 & 1 & 0 & \ldots & 0 \\ 1 & 1 & 1 & \ldots & 0 \\
\vdots & \vdots & \vdots & \ddots & \vdots \\
1 & 1 & 1 & \ldots & 1 \end{matrix} \right)^{-1}
\left( \begin{matrix} 1 & 1 & 1 & \ldots & 1 \\ 
0 & p & p & \ldots & p \\
0 & 0 & p^2 & \ldots & p^2 \\
\vdots & \vdots & \vdots & \ddots & \vdots \\
0 & 0 & 0 & \ldots & p^n \end{matrix} \right)$}. \]
It follows that each $R(d)$ is invertible;
the determinant of $R(p^n)$ is
$p^{n(n+1)/2}$ and its inverse is
$ R(p^n)^\circ /p^n$ where $R(p^n)^\circ$ is 
obtained from $R(p^n)$ by rotation by a half-turn. This leads
to an alternative proof of Proposition~\ref{prop:conj}(i) 
and may be useful more widely.

\section{Abelian B-groups}
\label{sec:abelian}

\subsection{After CFSG}
We now skip over many later developments, referring the reader to Neumann's essays in 
the collected works
\cite{BurnsideCWorksI, WielandtWerkeI}
for some of the missing history, 
and consider the situation after the Classification
Theorem of Finite Simple Groups. In an early application, it was used in \cite{CurtisKantorSeitz}
to determine all $2$-transitive permutation groups.
The resulting classification of all primitive permutation groups containing a regular
cyclic subgroup is given in \cite[Theorem 4.1]{FeitSomeConsequences} and 
\cite[page 164]{KantorSomeConsequences}, and independently refined in \cite{JonesCyclicRegular} and \cite{LiCyclicRegular}. We
state the version of this result relevant to Theorem~\ref{thm:main}~below.
(Here $S_d$ and $A_d$ denote the symmetric and alternating groups of degree~$d$, respectively;
the other notation is also standard.)

\begin{theorem}\label{thm:cyclic}
Let $G$ be a permutation group containing a regular cyclic subgroup $\langle g \rangle$ 
of composite prime-power order $p^n$. Then either $G$ is imprimitive, or $G$ is $2$-transitive and
one of the following holds:
\begin{itemize}
\item[(i)] $G = A_{p^n}$ or $G = S_{p^n}$ and $g$ is a $p^n$-cycle;
\item[(ii)] $\PGL_d(\F_q) \le G \le \PGammaL_d(\F_q)$ where $p^n = (q^d-1)/(q-1)$;
\item[(iii)] $p=3$, $n=2$, $G = \PGammaL_2(\F_8)$ and $g = s \sigma$ where $s \in \PGL_2(\F_8)$ is semisimple
of order $3$ and $\sigma$ is the automorphism of $\PGL_2(\F_8)$ induced by the Frobenius twist.
\end{itemize}
\end{theorem}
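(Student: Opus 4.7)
The plan is to combine Theorem~\ref{thm:main} with the CFSG-based classification of finite $2$-transitive permutation groups. By Theorem~\ref{thm:main}, if $G$ is not imprimitive then $G$ is $2$-transitive, so the task reduces to identifying which $2$-transitive groups of composite prime-power degree $p^n$ contain a regular cyclic subgroup of order $p^n$.

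Next, I would invoke the Curtis--Kantor--Seitz classification of finite $2$-transitive groups, in which every such $G$ is either affine (with elementary abelian regular normal subgroup and point stabiliser in $\Gamma L_a(\F_q)$) or almost simple, with socle from the explicit list $A_m$, $\PSL_d(\F_q)$, $\mathrm{PSU}_3(\F_q)$, $\mathrm{Sz}(\F_q)$, $\mathrm{Ree}(\F_q)$, $\mathrm{Sp}_{2d}(\F_2)$, the five Mathieu groups, $\mathrm{HS}$, $\mathrm{Co}_3$, together with a small number of sporadic exceptional actions. The problem is then to determine, for each family, when the degree can equal a composite prime power $p^n$ and when such a $G$ contains a cyclic subgroup of order $p^n$ acting regularly.

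I would work through the classes in turn. The symmetric group $S_m$ always contains an $m$-cycle, while $A_m$ contains one iff $m$ is odd; taking $m=p^n$ gives case~(i), with $A_{p^n}$ admissible exactly when $p$ is odd. The natural action of $\PGL_d(\F_q)$ on projective $(d{-}1)$-space contains a Singer cycle of order $(q^d-1)/(q-1)$ acting regularly, yielding case~(ii), after a short check that passing to an overgroup in $\PGammaL_d(\F_q)$ does not destroy this property. The small exceptional action of $\PGammaL_2(\F_8)$ on $9$ points requires separate attention: the Singer cycle of $\PGL_2(\F_8)$ already has order $9$ and accounts for $g \in \PGL_2(\F_8)$, but a distinct conjugacy class of regular cyclic subgroups of order $9$ arises from the elements $s\sigma$ of case~(iii), and must be recorded. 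The remaining socle families have degrees $q^3+1$, $q^2+1$, $q^3+1$, $2^{2d-1}\pm 2^{d-1}$, together with $11,12,22,23,24,176,276$ for the sporadic cases; none of these are composite prime powers, a short numerical exercise exploiting the fact that $q^k\pm 1$ is only rarely a prime power. In the affine case, any cyclic regular subgroup of order $p^n$ lies in a Sylow $p$-subgroup of $A\Gamma L_a(\F_q)$; an exponent bound shows that no such subgroup of order $p^n$ with $n\ge 2$ exists, except in degenerate small cases coinciding with~(i), for example $\mathrm{AGL}_2(\F_2)\cong S_4$.

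The main obstacle is exactly the case-by-case sieve in the previous paragraph: it requires the full $2$-transitive classification (a black box resting on CFSG) and, more concretely, the element-order and cyclic-subgroup structure of each listed group. Since this material has already been organised in \cite{FeitSomeConsequences, KantorSomeConsequences, JonesCyclicRegular, LiCyclicRegular}, given Theorem~\ref{thm:main} the proof of Theorem~\ref{thm:cyclic} amounts to quoting the relevant slice of those references.
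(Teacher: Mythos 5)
Your outline matches the paper in substance: the paper gives no proof of Theorem~\ref{thm:cyclic} at all, presenting it as a known consequence of CFSG drawn from \cite[Theorem 4.1]{FeitSomeConsequences}, \cite[page 164]{KantorSomeConsequences} and the refinements in \cite{JonesCyclicRegular, LiCyclicRegular}, which is exactly the reduction-plus-sieve you describe. Your sketch (reduce to the $2$-transitive classification, Singer cycles for case~(ii), the extra class $s\sigma$ in $\PGammaL_2(\F_8)$ for case~(iii), the numerical exclusion of the remaining socles, and the exponent bound in the affine case, cf.\ Proposition~\ref{prop:bound}(ii)) is a faithful summary of how those references establish the result.
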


Corollary~3 of \cite{LiebeckPraegerSaxl2000} gives a rough classification of primitive
permutation groups containing a regular subgroup. This was sharpened  by Li in 
\cite[Theorem~1.1]{LiAbelianRegular} for regular abelian subgroups.
Note that Case (2)(iv) of this theorem,
on groups with socle $S_m \times \cdots \times S_m$
or $A_m \times \cdots \times A_m$, is missing the assumption $m \ge 5$.
It is clear from Remark (b) following the theorem and the structure
of the proof in \S 5 that this assumption was intended;
it is required to exclude groups such as $S_2 \wr S_r$
and $A_3 \wr S_r$ with regular socle whose product action is imprimitive. 
(Primitive groups such
as~$S_4$ in its natural action or 
$S_3 \wr S_2$ in its product action are of affine type, and so
already considered in Case (1) of the theorem.)

It will
be useful to 
say that a group $K$ is \emph{$m$-factorizable} if there exists $r \ge 2$
and groups $K_1, \ldots, K_r$ such that $|K_1| = \ldots = |K_r| = m$
and $K \cong K_1 \times \cdots \times K_r$, and \emph{factorizable}
if it is $m$-factorizable for some $m \ge 3$.

\begin{proposition}\label{prop:affineOrFact}
If $K$ is a regular abelian subgroup of a primitive but not $2$-transitive 
permutation group $G$ then \emph{either}
\begin{itemize}
\item[(i)] $G = V \rtimes H$ where $V \cong \F_p^n$ is elementary abelian, the point
stabiliser $H \le \GL(V)$ acts irreducibly on $V$ but intransitively on 
$V \backslash \{0\}$ and $|K| = p^n$;~\emph{or}
\item[(ii)]  $K$ is $m$-factorizable for some $m \ge 5$. 
\end{itemize}
\end{proposition}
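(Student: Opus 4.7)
The plan is to read the proposition off Li's classification in \cite[Theorem~1.1]{LiAbelianRegular} of primitive permutation groups containing a regular abelian subgroup, interpreted with the correction $m \ge 5$ recorded in the preceding paragraph. That theorem divides such groups into an affine family (case (1)) and several non-affine families (the subcases of (2)). The strategy is to check that only two of these subcases survive when $G$ is primitive but not $2$-transitive, and that they correspond exactly to conclusions (i) and (ii).

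First suppose $G$ lies in the affine case, so $G = V \rtimes H$ with $V \cong \F_p^n$ acting regularly on itself by translation and $H \le \GL(V)$ a point stabiliser. Since $K$ acts regularly, $|K| = |V| = p^n$. Primitivity is equivalent to $H$ acting irreducibly on $V$, while the non-trivial orbits of the point stabiliser on $V$ are precisely the orbits of $H$ on $V \setminus \{0\}$; hence $G$ fails to be $2$-transitive if and only if $H$ is intransitive on $V \setminus \{0\}$. This is conclusion~(i).

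Now suppose $G$ is non-affine. The almost-simple and diagonal-type subcases of Li's (2) that can contain an abelian regular subgroup are $2$-transitive, as follows from the CFSG classification \cite{CurtisKantorSeitz}, so they are excluded by hypothesis. The only remaining possibility is Li's (2)(iv), where (with the corrected hypothesis $m \ge 5$) $G$ has socle $T_1 \times \cdots \times T_r$ with each $T_i \in \{A_m, S_m\}$ and $r \ge 2$, acting by the product action on $\Omega^r$ for $\Omega = \{1,\ldots,m\}$. Because $K$ is regular, $K$ must act trivially on the set of socle factors (otherwise the orbits of the kernel of that action would give a non-trivial block system violating regularity), so $K$ is contained in the direct product of the projections to the factors. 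Projecting $K$ to each coordinate therefore yields regular abelian subgroups $K_i \le \Sym(\Omega)$ of order $m$, and $K \cong K_1 \times \cdots \times K_r$, so $K$ is $m$-factorizable with $m \ge 5$: this is conclusion (ii).

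The main obstacle will be case-management rather than a single hard step: one has to read Li's statement carefully and eliminate the non-affine, non-product-action subcases by showing that each of them either forces $2$-transitivity or excludes a regular abelian subgroup. The subtlest subpoint is the claim that in the product-action case $K$ decomposes strictly along the coordinates of $\Omega^r$, with no `twisting' by the outer $S_r$ factor of $(S_m \wr S_r)$; the regularity argument sketched above handles this, and once $K$ lies in the base group the factorisation is immediate from the product action.
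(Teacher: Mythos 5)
Your proposal takes essentially the same route as the paper: both results are read off from Li's Theorem~1.1, with the affine case giving conclusion (i) and the product-action case with $r\ge 2$ giving conclusion (ii) via the factorization $K = K_1 \times \cdots \times K_r$, and the $r=1$ (almost simple) case discarded because it is $2$-transitive. Two small caveats. First, your claim that only Li's subcase (2)(iv) survives is not accurate: the other product-action rows of his table (socle factors such as $\PSL(d,q)$, $M_{11}$, $M_{23}$ with cyclic $K_i$) also yield primitive, non-$2$-transitive groups when $r \ge 2$; this is harmless because in every such row $K$ is still $m$-factorizable with $m \ge 5$, and the paper simply treats all of case (2) uniformly. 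Second, your supplementary argument that $K$ must lie in the base group --- ``otherwise the orbits of the kernel would give a non-trivial block system violating regularity'' --- does not work as stated, since a regular abelian group admits many block systems and this contradicts nothing; fortunately the step is redundant, because Li's theorem already asserts the decomposition $K = K_1 \times \cdots \times K_r$ with $K_i < \widetilde{T}_i$ of order $m$, which is all the paper uses.
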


\begin{proof}
If Case (1) of Li's theorem applies then $G \le \mathrm{AGL}_d(\F_p)$
where~$p$ is prime and $G$ acts on its socle $V \cong \F_p^d$. It is
easy to show (see for example \cite[Theorem 4.8]{DixonMortimer}) that $G = V \rtimes H$
where $H \le \GL(V)$ is irreducible. Since $G$ is not $2$-transitive,
$H$ is not transitive. In the remaining case of Li's theorem,~$G$ 
is of the form $(\widetilde{T}_1 \times \cdots \times \widetilde{T}_r) . O . P$
where $O \le \Out(\widetilde{T}_1) \times \cdots \times \Out(\widetilde{T}_r)$, $P$ is 
transitive of degree $r$ and each $\widetilde{T}_r$ is an
almost simple permutation group of degree $m \ge 5$.
Moreover $K = K_1 \times \cdots \times K_r$ where 
$K_i < \widetilde{T}_i$ and each $K_i$ has order~$m$. Therefore,
if $r \ge 2$, then $K$ is factorizable into $m$-subgroups with $m \ge 5$.
If $r=1$ then, as Li remarks following his theorem, $G$ is $2$-transitive,
so need not be considered any further.
\end{proof}

Theorem~25.7 in \cite{WielandtBook1964} generalizes Example~\ref{ex:prim} to show that
if $m \ge 3$ and~$K$ is $m$-factorizable with $r$ factors then $K$ is a regular subgroup of $S_m \wr S_r$
in its primitive action on $\{1,\ldots,k\}^r$. This action is not $2$-transitive,
 so $K$ is not a B-group. We therefore
have the following corollary, first observed in \cite[Corollary~1.3]{LiAbelianRegular}.

\begin{corollary}\label{cor:Bfact}
No factorizable group is a B-group. Moreover,
an abelian group not of prime-power order is a B-group
if and only if it is not factorizable. 
\end{corollary}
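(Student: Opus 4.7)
The plan is to combine two ingredients already assembled just above the statement: Wielandt's wreath product construction (Theorem~25.7 of \cite{WielandtBook1964}, cited in the paragraph preceding the corollary) for the negative direction, and Proposition~\ref{prop:affineOrFact} for the positive direction. The corollary is then a short book-keeping argument.

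First I would prove the first sentence. By the definition given just before the statement, if $K$ is factorizable then $K$ is $m$-factorizable with $r\ge 2$ factors for some $m\ge 3$, and by the cited theorem of Wielandt, $K$ embeds as a regular subgroup of $S_m\wr S_r$ in its primitive action on $\{1,\ldots,m\}^r$. This action is not $2$-transitive (the point stabiliser has several orbits, as in the two-factor case treated in Example~\ref{ex:prim}), so $K$ is witnessed inside a primitive, non-$2$-transitive group and is not a B-group.

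The `only if' direction of the second sentence is then immediate from the contrapositive of the first sentence (and uses neither abelianness nor the non-prime-power hypothesis). For the `if' direction, I would let $K$ be an abelian group that is neither of prime-power order nor factorizable, and let $G$ be a transitive permutation group containing $K$ as a regular subgroup. Suppose for contradiction that $G$ is primitive and not $2$-transitive. Then Proposition~\ref{prop:affineOrFact} applies and forces one of its two alternatives: case~(i) gives $|K|=p^n$, contradicting the hypothesis on the order of~$K$; case~(ii) gives that $K$ is $m$-factorizable for some $m\ge 5$, contradicting that $K$ is not factorizable. Hence $G$ must be imprimitive or $2$-transitive, i.e.\ $K$ is a B-group.

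There is no substantive obstacle: essentially all of the content is packaged inside Proposition~\ref{prop:affineOrFact} (which in turn rests on Li's theorem and, ultimately, on CFSG) and inside Wielandt's wreath product construction. The corollary's role is only to combine these into a clean equivalence, so I would expect the proof to take just a few lines once those results are quoted.
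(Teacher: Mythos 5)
Your proof is correct and follows exactly the route the paper intends: Wielandt's Theorem~25.7 (quoted in the paragraph preceding the corollary) for the first sentence and the `only if' direction, and Proposition~\ref{prop:affineOrFact} to rule out both alternatives for the `if' direction. Nothing further is needed.
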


It is an open problem to determine the non-factorizable abelian
B-groups of prime-power order. We end 
with some partial results and reductions.


\subsection{Elementary abelian B-groups}
Exercise 3.5.6 in \cite{DixonMortimer} asks for a proof that $C_p^n$ is never a B-group.
This is true when  $p > 2$ by Corollary~\ref{cor:Bfact} (clearly~$C_p$ in its regular
action is primitive but not $2$-transitive), but
false, in general, when $p=2$.\footnote{This mistake is corrected in the errata available
at
\url{people.math.carleton.ca/~jdixon/Errata.pdf}.} 
For example, the primitive permutation groups of 
degree $8$ containing a regular
subgroup isomorphic to~$C_2^3$ are
$A_8$, $S_8$ and the affine groups $\F_2^3 \rtimes C_7$,
$\F_2^3 \rtimes (C_7 \rtimes C_3)$ and $\F_2^3 \rtimes \GL_3(\F_2)$. All of 
these groups contain a $7$-cycle, and so are $2$-transitive. Therefore $C_2^3$ is a B-group. 

These examples motivate the following lemma, whose proof requires
Burnside's dichotomy 
on permutation groups of prime degree. The significance of Mersenne
primes will be seen shortly.

\begin{lemma}\label{lemma:CameronKantor}
Let $V = \F_2^n$ where $2^n-1$ is prime. A subgroup $H \le \GL(V)$ is transitive on $V \backslash \{0\}$
if and only if $H \cong C_{2^n-1}$, $H \cong C_{2^n-1} \rtimes C_n$ or $H = \GL(V)$.
\end{lemma}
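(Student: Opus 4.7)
The plan is to apply Burnside's 1901 dichotomy (Theorem~\ref{thm:Burnside1901}) to the action of $H$ on the set $V \setminus \{0\}$, which has prime cardinality $p := 2^n - 1$. Note at the outset that $p$ being prime forces $n$ itself to be prime, since any nontrivial factorisation $n = ab$ with $a, b > 1$ would exhibit $2^a - 1$ as a proper divisor of $2^n - 1$.

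For the ``if'' direction, I would identify $V$ with the additive group of $\F_{2^n}$ and realise $C_{2^n-1}$ as the Singer cycle, namely the subgroup of $\GL(V)$ given by multiplication by a primitive element of $\F_{2^n}^\times$; this acts regularly, hence transitively, on $V \setminus \{0\}$. Adjoining the Frobenius automorphism of $\F_{2^n}$, which has order $n$ and normalises the Singer cycle, yields the semilinear group $\Gamma L_1(\F_{2^n}) \cong C_{2^n-1} \rtimes C_n$, also transitive; and $\GL(V)$ is trivially transitive on $V \setminus \{0\}$.

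For the ``only if'' direction, suppose $H \le \GL(V)$ is transitive on $V \setminus \{0\}$, so $p$ divides $|H|$. Applying Theorem~\ref{thm:Burnside1901} to $H$ as a permutation group of prime degree $p$, either $H$ contains a (necessarily unique and cyclic) normal subgroup $P$ of order $p$, or $H$ is $2$-transitive on $V \setminus \{0\}$. In the first case $P$ is a Singer cycle, so $H$ lies in its normaliser $N_{\GL(V)}(P) = \Gamma L_1(\F_{2^n}) \cong C_{2^n-1} \rtimes C_n$. Since $n$ is prime, the quotient $C_n$ has no proper non-trivial subgroups, so $H$ is either $P$ or the full normaliser, yielding the first two possibilities.

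The main obstacle is the $2$-transitive case. To conclude $H = \GL(V)$ here I would invoke the CFSG-based classification of $2$-transitive permutation groups of prime degree; this is in keeping with the current section, which is explicitly headed ``After CFSG''. From that classification, the only $2$-transitive subgroup of $\GL_n(\F_2)$ acting on $V \setminus \{0\}$ which is not of affine/Singer type (and so is not already covered by the first case) is $\PSL_n(\F_2) = \GL_n(\F_2)$ itself: the alternating, symmetric and sporadic $2$-transitive groups of Mersenne prime degree do not embed into $\GL_n(\F_2)$ in a way realising their natural $2$-transitive action on $V \setminus \{0\}$.
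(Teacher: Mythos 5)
Your argument coincides with the paper's up to and including the non-$2$-transitive case: both apply Theorem~\ref{thm:Burnside1901} to the degree-$(2^n-1)$ action on $V\setminus\{0\}$, identify the resulting normal subgroup of order $2^n-1$ as a Singer cycle, and use the fact that its normaliser in $\GL(V)$ is $C_{2^n-1}\rtimes C_n$ together with the primality of $n$ (forced, as you note, by the primality of $2^n-1$) to conclude that $H$ is either the Singer cycle or its full normaliser. The divergence is in the $2$-transitive case, and there your proposal has a real gap. The paper passes to the $3$-transitive affine group $V\rtimes H$ and quotes Theorem~1 of Cameron--Kantor (a pre-CFSG, largely geometric result), which over $\F_2$ leaves only $V\rtimes\GL(V)$ and, for $n=4$, $V\rtimes A_7$; the latter is killed because $2^4-1$ is composite. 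You instead quote the CFSG list of $2$-transitive groups of prime degree and simply assert that no entry other than $\GL_n(\F_2)$ itself can be realised inside $\GL_n(\F_2)$ acting on non-zero vectors. That assertion is the entire content of this case and is not automatic: besides $A_p$, $S_p$ (excluded by order) and the sharply $2$-transitive affine entry $C_p\rtimes C_{p-1}$ (excluded because the Singer normaliser has order only $(2^n-1)n$ and $n<2^n-2$), the list contains further projective entries $\PSL_d(\F_q)$ with $(q^d-1)/(q-1)=2^n-1$ and $(d,q)\neq(n,2)$ --- for instance $\PSL_3(\F_5)$ is a $2$-transitive group of degree $31=2^5-1$ --- and each of these must be shown not to embed in $\GL_n(\F_2)$ compatibly with the action (here an order count suffices, since $|\PSL_3(\F_5)|\nmid|\GL_5(\F_2)|$, but some argument is required in general). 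The sporadic entries happen not to arise only because $11$ and $23$ are not Mersenne primes, which again deserves saying. So your route can be completed, but as written the decisive step is a claim rather than a proof, and it trades the paper's self-contained appeal to Cameron--Kantor for a heavier CFSG dependency plus an unfinished case analysis.
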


\begin{proof}
The `if' direction is clear.
By Theorem~\ref{thm:Burnside1901}, if $H$ is transitive on $V \backslash \{0\}$ then either $H \cong C_{2^n-1}
\rtimes C_r$, for some $r$, or $H$ is $2$-transitive. 
Identifying $V \backslash \{0\}$ with $\F_{2^n}^\times$, 
we see that there exists $h \in \GL(V)$ of order $2^n-1$. (Such elements are called Singer cycles.)
Let $\alpha$ be a primitive $(2^n-1)$-th root of unity. Note that $h$ is conjugate to $h^s$ in $\GL(V)$ if and only
if the map $\beta \mapsto \beta^s$ permutes the eigenvalues $\alpha, \alpha^2, \ldots, \alpha^{2^{n-1}}$ of $h$. Thus $N_{\GL(V)}(\langle h \rangle) \cong C_n$
is generated by an element of prime order $n$ conjugating~$h$ to $h^2$, and either $r=1$ or $r=n$.
If $H$ is $2$-transitive then $V \rtimes H$ is $3$-transitive. 
Such groups were classified by Cameron and Kantor in \cite{CameronKantor}.
By their Theorem 1 in the case of vector spaces over $\F_2$,
the only such groups are $V \rtimes \GL(V)$ and, when $n=4$, $V \rtimes A_7$.
Since $2^4-1$ is composite,
only the former case arises.
\end{proof}

It is worth noting that \cite{CameronKantor} predates the classification theorem; the methods
used are mainly from discrete geometry rather than group theory. More generally, Hering \cite{HeringI, HeringII}
has classified the linear groups $H$ transitive on non-zero vectors, under various assumptions on
the composition factors of~$H$.


\begin{proposition}\label{prop:Mersenne}
Let $V = \F_2^n$. The elementary abelian group $C_2^n$ is a B-group if and only if $2^n-1$ is a Mersenne prime
and the only simple irreducible subgroups of $\GL(V)$ are $C_{2^n-1}$ and 
$\GL(V)$.
\end{proposition}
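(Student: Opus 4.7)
The plan is to apply Proposition~\ref{prop:affineOrFact} to translate the B-group property for $C_2^n$ into an intrinsic condition on $\GL(V)$, and then combine Lemma~\ref{lemma:CameronKantor} with a Clifford-theoretic analysis of minimal normal subgroups to pin down the irreducible subgroups. Throughout, note that $2^n-1$ being a Mersenne prime forces $n$ itself to be prime, which automatically rules out case~(ii) of Proposition~\ref{prop:affineOrFact} (factorizability with $m \ge 5$) for $K = C_2^n$.

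For the forward implication I would show that $C_2^n$ being a B-group forces $2^n-1$ to be prime. Suppose not. If $n = ra$ with $r \ge 2$ and $a \ge 3$, then $C_2^n \cong (C_2^a)^r$ is $2^a$-factorizable with $2^a \ge 8$, and the wreath-product construction generalising Example~\ref{ex:prim} (as in \cite[Theorem~25.7]{WielandtBook1964}) embeds $C_2^n$ as a regular subgroup of the primitive non-$2$-transitive group $S_{2^a}\wr S_r$, a contradiction. Otherwise $n$ is either $4$ or a prime; since $2^n-1$ is composite, Zsigmondy's theorem (whose sole exception $n=6$ is already handled) supplies a prime $q < 2^n-1$ such that the multiplicative order of $2$ modulo $q$ equals $n$, and the subgroup of order $q$ of the Singer cycle in $\GL(V)$ acts irreducibly on $V$ (its generator generates $\F_{2^n}$ over $\F_2$) yet intransitively on $V\backslash\{0\}$, so Proposition~\ref{prop:affineOrFact}(i) again produces a non-B-group overgroup. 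Having proved $2^n-1$ is prime, any simple irreducible $T \le \GL(V)$ must be transitive (else Proposition~\ref{prop:affineOrFact}(i) applies), so by Lemma~\ref{lemma:CameronKantor} it lies in $\{C_{2^n-1},\, C_{2^n-1}\rtimes C_n,\, \GL(V)\}$; the middle entry contains $C_{2^n-1}$ as a proper non-trivial normal subgroup and so is not simple.

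For the reverse implication, under the stated hypotheses I would show every irreducible $H \le \GL(V)$ is transitive on $V\backslash\{0\}$. Let $N$ be a minimal normal subgroup of $H$, necessarily characteristically simple. If $N$ is a non-trivial $2$-group, then $V^N$ is non-zero and $H$-invariant, so by irreducibility $V^N = V$, forcing $N = 1$, contradiction. If $N$ is elementary abelian of odd prime order, Clifford's theorem combined with the primality of $n = \dim V$ forces $V$ to be $\F_2[N]$-irreducible (the alternative dimension partitions require $\F_2^\times$-valued, hence trivial, $N$-characters); then $\End_N(V) \cong \F_{2^n}$ by Schur, and $N$ embeds in the cyclic group $\F_{2^n}^\times$ of prime order $2^n-1$, so $N \cong C_{2^n-1}$ is the Singer cycle. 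If $N = T^r$ with $T$ non-abelian simple, a parallel Clifford analysis, using that each non-trivial absolutely irreducible of $T$ has dimension $\ge 2$ and that $n$ is prime, forces $r = 1$ and $V$ to be $T$-irreducible of dimension $n$; thus $T$ is a simple non-abelian irreducible subgroup of $\GL(V)$, so by hypothesis $T = \GL(V)$ and hence $H = \GL(V)$. In either non-trivial case $H$ contains the Singer cycle or equals $\GL(V)$, and so is transitive, contradicting intransitivity and completing the proof.

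The main obstacle I expect is the Clifford analysis when $N = T^r$ is a direct product of non-abelian simple factors: to extract $r = 1$ cleanly, one must combine primality of $n$, $T$ being non-abelian (so its smallest non-trivial absolutely irreducible has dimension $\ge 2$), faithfulness of $N$ on $V$, and the fact that $H$ permutes the simple factors of $N$ transitively by minimality of $N$, to rule out a twisted tensor structure in which only one factor of $T^r$ acts non-trivially while $H$ still permutes the factors transitively.
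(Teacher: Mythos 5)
Your proof is correct, and its converse direction takes a genuinely different route from the paper's. The forward direction is essentially the paper's: both produce, when $2^n-1$ is composite, a proper subgroup of the Singer cycle generated by an element whose order is a Zsigmondy prime divisor of $2^n-1$, hence irreducible but intransitive on $V\setminus\{0\}$ (you divert the case where $n$ has an odd factor, including the Zsigmondy exception $n=6$, through the $S_{2^a}\wr S_r$ construction, where the paper instead takes the cube of a Singer cycle, of order $21$, when $n=6$), and both then read off the transitive subgroups from Lemma~\ref{lemma:CameronKantor}; your parenthetical appeal to Proposition~\ref{prop:affineOrFact}(i) here should really be the direct observation that $V\rtimes T$ is primitive but not $2$-transitive. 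For the converse, the paper embeds an intransitive irreducible $H$ in a maximal subgroup $M$ of $\GL(V)$, invokes Aschbacher's classification to conclude $M$ is almost simple, and applies Clifford's theorem to the socle of $M$; you instead apply Clifford's theorem directly to a minimal normal subgroup $N$ of $H$ and split according to whether $N$ is a $2$-group, odd elementary abelian, or a product of non-abelian simple groups. Your route is more self-contained: it avoids Aschbacher's theorem, and in particular it sidesteps the one delicate point in the paper's argument, namely that the field-extension class does occur in prime dimension (the Singer normalizer $C_{2^n-1}\rtimes C_n$ is a maximal subgroup of $\GL_n(\F_2)$), so the claim that every irreducible maximal subgroup is almost simple needs the extra remark that this exception is transitive on $V\setminus\{0\}$; in your analysis the corresponding (odd abelian) case directly forces $N$ to be the full Singer cycle because $2^n-1$ is prime. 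The price is the step you rightly flag for $N\cong T^r$: after Clifford and primality of $n$ force $V$ to restrict irreducibly to $N$, one passes to $\overline{\F}_2$, writes an absolutely irreducible constituent as $W_1\otimes\cdots\otimes W_r$, and uses faithfulness of $V$ (hence of each $W_i$, as each $T_i$ is simple and not in the kernel) to get $\dim W_i\ge 2$, whence $r=1$ and $V$ is $T$-irreducible since $n$ is prime; this works and completes your argument.
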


\begin{proof}
Suppose that $2^n-1$ is composite. Let $h \in \GL(V)$ be a Singer cycle.
If $n\not=6$ then, by Zsigmondy's Theorem \cite{Zsigmondy}, there exist a prime $r$ such that~$r$ divides $2^n-1$ and $r$
does not divide $2^m-1$ for any $m < n$. Thus $h^{n/r}$ does not permute the vectors of a non-zero proper subspace
of $V$, and so  $\langle h^{n/r} \rangle$ acts irreducibly on $V$
and intransitively on $V \backslash \{0\}$. Therefore $V \rtimes \langle h^{n/r} \rangle$ is primitive
but not $2$-transitive, and so $C_2^n$ is not a B-group. In the exceptional case of Zsigmondy's Theorem when $n=6$,
we simply take~$h^3$, of order~$21$.

Suppose that $2^n-1$ is prime and that there is a simple irreducible group $T \le \GL(V)$
other than $C_{2^n-1}$ and $\GL(V)$. By Lemma~\ref{lemma:CameronKantor}, $T$ is intransitive on $V \backslash \{0\}$,
and so $V \rtimes T$ is not $2$-transitive. Hence $C_2^n$ is not a B-group. Conversely, assume that
no such simple group exists, and, for a contradiction, that $C_2^n$ is not a B-group. 
By Proposition~\ref{prop:affineOrFact},
there exists a proper irreducible subgroup $H$ of $\GL(V)$ such that $H$ is intransitive on $V \backslash \{0\}$.
Let~$M$ be a maximal subgroup of $\GL(V)$ containing $H$. The maximal subgroups of classical groups
were classified by Aschbacher in \cite{AschbacherMaximals}. Of the~$11$ Aschbacher classes, the first consists
of reducible groups, and the remaining~$10$ of groups preserving a structure on $V$ that can exist only when $V$
has composite dimension. Therefore $M$ is an almost simple group. Since $M$ is a proper subgroup of $\GL(V)$,
Lemma~\ref{lemma:CameronKantor} implies that $M$ is intransitive on $V \backslash \{0\}$.
Let $T$ be the simple normal subgroup of $M$. By Clifford's Theorem (\cite[Theorem~I]{Clifford}), the restriction
of $V$ to $T$ decomposes as a direct sum of irreducible representations of $T$ of the same dimension. 
Since $n$ is prime, $T$ acts irreducibly on $V$. Its
orbits are contained in the orbits of $M$, so it acts intransitively on $V \backslash \{0\}$,
contrary to our assumption.
\end{proof}

By Proposition~\ref{prop:Mersenne},
a solution to the following problem will  imply that~$C_2^n$ is a B-group
if and only if $2^n-1$ is a Mersenne prime. 

\begin{problem}\label{prob:simpleOdd}
Show that if $2^n-1$ is a Mersenne prime and $n \ge 3$ then
no non-abelian finite simple group other than $\GL_n(\F_2)$ has an 
irreducible representation of dimension~$n$ over~$\F_2$.
\end{problem}

The two remarks below give some partial progress on Problem~\ref{prob:simpleOdd}.
\begin{itemize}
\item[(1)]The Atlas \cite{AtlasBrauer} data available in {\sc gap} \cite{GAP4}
shows that, with the possible exceptions of $J_4$, $Ly$, $Th$, $Fi_{24}$, $B$ and $M$, 
no sporadic simple group has an irreducible representation
over $\F_2$ of dimension $n$ where $2^n-1$ is a Mersenne prime.
Indeed, it appears to be rare for a sporadic group or a finite group 
of Lie type to have a non-trivial irreducible representation over~$\F_2$
of odd dimension. The author knows of no examples of such representations
of alternating groups. Since a self-dual representation has an invariant
alternating form, whereas an odd-dimensional orthogonal group over $\F_2$
has a $1$-dimensional invariant subspace, such a representation is necessarily
not self-dual.

\smallskip
\item[(2)] Inspection of tables of small dimensional representations of quasisimple groups 
\cite{HissMalle, HissMalleCorrigenda} and (for the groups deliberately excluded therein),
Chevalley groups in defining characteristic
\cite{Lubeck} show that no finite simple group except for $\GL_n(\F_2)$ 
has an irreducible representation over $\F_2$ of dimension~$n \le 250$
such that $2^n-1$ is a Mersenne prime.
Thus if $n \le 250$ then $C_2^n$ is a B-group if and only if
 \[ \qquad n \in \{1,2,3,5,7,13,17,19,31,61,89,107,127\}. \]
\end{itemize}

\subsection{Non-elementary abelian $B$-groups}

An interesting feature of the affine groups in Proposition~\ref{prop:affineOrFact}(i) is that
they may contain regular abelian subgroups other than $C_p^n$. In Remark~1.1 
in~\cite{LiAbelianRegular}, Li gives the example $V \rtimes S_n$ where $V$ is the
subrepresentation $\langle e_2-e_1, \ldots, e_n-e_1 \rangle$ of the natural
permutation representation $\langle e_1, \ldots, e_n \rangle$ of $S_n$ over $\F_p$. 
To avoid a potential ambiguity, let $t_v \in V \rtimes H$ denote translation by $v \in V$.
If $2s < n$ then the subgroup of $V \rtimes H$ generated by
\[ (2,3)t_{e_1+e_2},\, \ldots,\, (2s,2s+1)t_{e_1+e_{2s}},\, t_{e_{2s+2}},\, \ldots, t_{e_n} \]
is regular and isomorphic to $C_4^s \times C_2^{n-2s-1}$.
Li  claims that $V \rtimes H$ is primitive. However $H$ acts irreducibly only
when $n$ is odd (and so $\dim V$ is even, as expected by Remark~(1) above).
Thus if $r \in \N_0$ and $s \in \N$ then $C_4^s \times C_2^{2r}$ is not a B-group,
but Li's example sheds no light on when $C_4^s \times C_2^{2r+1}$, which may be non-factorizable, 
is a B-group. This is a special case of the following~problem.

\begin{problem}\label{prob:nonelem}
Classify non-elementary abelian B-groups of prime-power order.
\end{problem}

By Proposition~\ref{prop:affineOrFact}, this problem reduces to classifying
regular abelian subgroups of affine groups $V \rtimes \GL(V)$. The main result of
\cite{CDVS} is a beautiful bijective correspondence between such subgroups and
nilpotent algebras with underlying vector space $V$. 
To explain part of this correspondence, observe that if $K$ is an regular abelian subgroup of $V \rtimes H$ 
where $H \le \GL(V)$
then, for each $v \in V$,
there exists a unique $h_v \in H$ such that $h_v t_v \in K$. 
From
$h_u h_v t_{uh_v + v} = h_u t_u h_v t_v = h_vt_vh_ut_u=   h_vh_u t_{vh_u+u}$ 
for $u,v \in V$, we see that $\{h_v : v \in V\}$ is an abelian subgroup of $H$ 
and $uh_v + v = vh_u + u$ for all $u$, $v \in V$. Replacing~$v$ with $v+w$, we obtain
\[ uh_{v+w} + (v+w) = (v+w)h_u + u 
                 = vh_u + wh_u + u 
                 = uh_v + v + uh_w + w - u\]
and so, cancelling $v+w$ and subtracting $u$, we have
\begin{equation}\label{eq:linearity}  h_{v+w} - 1 = (h_v - 1) + (h_w - 1)  \end{equation}
for all $v$, $w \in K$. This additivity property is highly restrictive.

\begin{example}
Let $K =\{h_v t_v : v \in V\}$ 
be a regular abelian subgroup of $V \rtimes S_n$, where $V$ is as in Li's example.
The matrix $X$ representing $h_v$ in the basis $e_2-e_1$, \ldots, $e_n-e_1$ of $V$
is a permutation matrix if and only if~$1h_v = 1$. If $1h_v = a$
and $bh_v = 1$ then, since $(e_i-e_1)h_v = (e_{ih_v}-1)-(e_a-1)$, each entry
of $X$ in column $e_a-e_1$ is $-1$, row $e_i-e_1$ has a $1$
in column $e_{ih_v}-e_1$ for each $i\not=b$, and $X$ has no other non-zero entries.
By~\eqref{eq:linearity},
$h_{2v} = 2h_v - 1$, so $h_{2v}$ is represented by $2X - I$, where $I$ is the identity matrix.
But $2X - I$ is not of either of these forms unless $p=2$ or $X = I$.
Therefore $V$ is the unique regular abelian
subgroup of $V \rtimes S_n$ if $p > 2$. Suppose that $p=2$. If $h_v$ has order $4$ or more, the matrix representing
$h_v+h_v^{-1} + 1$ has multiple non-zero entries in the columns for both $e_a-e_1$ and $e_b-e_1$, again 
contradicting~\eqref{eq:linearity}.
Therefore each $h_v$ has order at most $2$. It follows that $K$ has exponent $2$ or~$4$. Thus
the examples given by Li are exhaustive.
\end{example}

When $p$ divides $n$ the representation $V$ has an
irreducible quotient $U = V / \langle e_1 + \cdots + e_n \rangle$. Similar arguments show that
$U \rtimes S_n$ has a non-elementary abelian 
regular subgroup if and only if $p=2$. Any such subgroup has exponent~$4$, with the
exception that when  $n=6$, $U \rtimes S_6$ has an regular abelian subgroup isomorphic to $C_8 \times C_2$.
This does not contradict the result first claimed by Manning (see \S\ref{subsec:Manning})
since in this case $S_6$ acts transitively on $U \backslash \{0\}$; the related
$2$-transitive action of $A_7$ on $\F_2^4$, coming from the isomorphism
$A_8 \cong \GL_4(\F_2)$, was seen in the proof of Lemma~\ref{lemma:CameronKantor}.

We end with some consequences of the following observation: if $J$ is the $m \times m$ unipotent Jordan block matrix
over $\F_p$
then $J^{p^r} = I$ if and only if $p^r \ge m$ and
$I + J + \cdots + J^{p^r-1} = 0$ if and only if $p^r > m$. (The latter can be proved most simply
using the identity $I + J + \cdots + J^{p^r-1} = (J-I)^{p^r-1}$.)

\begin{proposition}\label{prop:bound}
Let $V = \F_p^n$ and let $K$ be a regular abelian subgroup of $V \rtimes \GL(V)$.
\begin{thmlist}
\item If $n < p$ then $K \cong C_p^n$.
\item If $K \cong C_{p^n}$ then either $n=1$ or $p=2$ and $n=2$.
\end{thmlist}
\end{proposition}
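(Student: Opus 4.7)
The plan is to handle both parts of Proposition~\ref{prop:bound} through a single computation of $(h_v t_v)^{p^r}$, using the additivity relation~\eqref{eq:linearity}. First I would note that, since $K$ acts regularly on $V$, we have $|K| = |V| = p^n$, so $K$ is a $p$-group. The projection $\pi \colon K \to \GL(V)$ sending $h_v t_v$ to $h_v$ is a group homomorphism whose image is therefore a $p$-group, so every $h_v \in \GL_n(\F_p)$ is unipotent. Writing $\rho(v) = h_v - 1 \in \End(V)$, this means $\rho(v)$ is nilpotent with $\rho(v)^n = 0$.

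The key computation starts from the telescoping identity
\[ (h_v t_v)^k = h_v^k\, t_{v(1 + h_v + \cdots + h_v^{k-1})}. \]
Using the Frobenius identity $(1 + \rho(v))^{p^r} = 1 + \rho(v)^{p^r}$ together with the hockey-stick identity and the fact that $\binom{p^r}{j} \equiv 0 \pmod p$ for $0 < j < p^r$, I would show that
\[ \sum_{k=0}^{p^r-1} h_v^k = \sum_{j=0}^{p^r-1} \binom{p^r}{j+1}\, \rho(v)^j = \rho(v)^{p^r-1}, \]
and thereby obtain
\[ (h_v t_v)^{p^r} = \bigl(1 + \rho(v)^{p^r}\bigr)\, t_{v \rho(v)^{p^r - 1}}. \]
Both parts are thus reduced to controlling the nilpotency index of $\rho(v)$, which never exceeds $n$.

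For part~(i), take $r = 1$: the hypothesis $n < p$ gives $p - 1 \geq n$, so $\rho(v)^{p-1} = \rho(v)^p = 0$ and $(h_v t_v)^p = 1$ for every $v$. Hence $K$ is abelian of order $p^n$ and exponent $p$, so $K \cong C_p^n$. For part~(ii), assume $n \geq 2$ and $(n, p) \neq (2, 2)$, and take $r = n - 1$: a short check gives $p^{n-1} \geq n + 1$ in all these cases (the inequality fails only when $(n, p) = (2, 2)$), forcing $\rho(v)^{p^{n-1}} = \rho(v)^{p^{n-1}-1} = 0$ and hence $(h_v t_v)^{p^{n-1}} = 1$ for every $v$. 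Thus $K$ has exponent at most $p^{n-1} < p^n$ and cannot be cyclic of order~$p^n$. The case $n = 1$ is immediate, and $(n, p) = (2, 2)$ is admitted by the statement. The only technical step is the binomial-coefficient reduction, and the conceptual crux is the a priori nilpotency of each $\rho(v)$; I foresee no real obstacle.
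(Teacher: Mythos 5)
Your proof is correct and is essentially the paper's own argument: the same computation of $(h_v t_v)^{p^r}$ via $1 + h_v + \cdots + h_v^{p^r-1} = (h_v-1)^{p^r-1}$, with the nilpotency index of $h_v - 1$ bounded by $n$ (the paper phrases this in terms of unipotent Jordan block sizes and states part (ii) contrapositively, but the inequality $p^{n-1} > n$ doing the work is identical). Your explicit justification that each $h_v$ is unipotent, via the projection of the $p$-group $K$ onto a $p$-subgroup of $\GL(V)$, is a point the paper leaves implicit.
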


\begin{proof}
For $h_v t_v \in K$ we have $ (h_v t_v)^{p^r} = h_v^{p^r} \hskip-1pt t_w$ 
where $w = v + vh_v + \cdots + vh_v^{p^r-1}$.
Hence, using the observation just made, if $n < p$ then $(h_v-1)^p = 0$ and so $h_v^p = 1$ and $(h_v t_v)^p = 1$,
giving (i). Now suppose that $h_v t_v$ generates~$K$. Since $(h_vt_v)^{p^{n-1}} \not= 1$, we have
$v + vh_v + \cdots + vh_v^{p^{n-1}-1} \not=0$.
Hence there is a $m \times m$ unipotent Jordan block in $h_v$ with $m \ge p^{n-1}$. Therefore $n \ge p^{n-1}$
which implies (ii).
\end{proof}

The subgroups $K$ in Proposition~\ref{prop:bound}(i)
may be classified up to conjugacy in the affine group using the theory in \cite{CDVS}.
Using Proposition~\ref{prop:bound}(i) to rule out degrees, it follows from an exhaustive search
through the library of primitive permutation groups in {\sc magma} \cite{Magma} that the
abelian B-groups of composite 
prime-power degree $d$ where $d \le 255$ are precisely those listed in Table 1 above.
Finally we 
remark that Proposition~\ref{prop:affineOrFact}  and Proposition~\ref{prop:bound}(ii) together
imply that $C_{p^n}$ is a B-group for all primes~$p$ and all $n \in \N$ with $n \ge 2$, giving
one final proof of Theorem~\ref{thm:main}.

\begin{table}
\begin{center}
\begin{tabular}{llll} \toprule
	$d$ & $p^n$ & $f(d)$ &  abelian B-groups of order $d$ \\ \midrule
	4   & $2^2$ & 0 & $C_2^2$, $C_4$ \\
	8   & $2^3$ & 0 & $C_2^3, C_4 \times C_2, C_8$ \\
	9   & $3^2 $ & 2  & $C_9$ \\
	16  & $2^4$ & 9 & $C_8 \times C_2$, $C_{16}$ \\
	25  & $5^2$ & 17 & $C_{25}$ \\
	27  & $3^3$ & 9 & $C_9 \times C_3$, $C_{27}$ \\
	32  & $2^5$ & 0 & $C_2^5, C_4 \times C_2^3, 
	C_4^2 \times C_2, C_8 \times C_2^2, C_8 \times C_4, C_{16} \times C_2, C_{32}$ \\
    49  & $7^2$ & 29 & $C_{49}$ \\
	64  & $2^6$ & 55& $C_{16} \times C_2^2$, $C_{16} \times C_4$, $C_{32} \times C_2$, $C_{64}$ \\
	81  & $3^4$ & 125 & $C_{27} \times C_3, C_{81}$ \\
   121  & $11^2$ & 43 & $C_{121}$ \\	   
   125  & $5^3$ & 38 & $C_{25} \times C_5, C_{125}$ \\
   128  & $2^7$ & 0 & $C_2^7, C_4 \times C_2^5, C_4^2 \times C_2^3, C_4 \times C_2^5, 
   C_8 \times C_2^4, C_8 \times C_4 \times C_2^2,
   C_8 \times C_4^2$,
    \\
   & & & $C_8^2 \times C_2, C_{16} \times C_2^3, C_{16} \times C_4 \times C_2, C_{16} \times C_8, C_{32} \times C_2^2,
   C_{32} \times C_4$, 
 \\
   & & &   $C_{64} \times C_2, C_{128}$ \\ 
 169  & $13^2$ & 64 & $C_{169}$ \\
 243  & $3^5$ & 30 & $C_{9} \times C_3^3$, $C_9 \times C_9 \times C_3$, $C_{27} 
 \times C_3^2, C_{27} \times C_9, C_{81} \times C_3, 
   C_{243}$ 
    \\ \bottomrule
\end{tabular}
\end{center}
\caption{All abelian B-groups of composite prime-power degree $d$ where $d \le 255$; $f(d)$ is
the number of primitive permutation groups of degree $d$ that are not $2$-transitive.}
\end{table}
	


\section*{Acknowledgements}

I thank Nick Gill for his answer to a 
MathOverflow question outlining
an alternative proof of Proposition~\ref{prop:affineOrFact}
 and Derek Holt for several helpful comments on
 this question (see \url{mathoverflow.net/questions/258434/}).
I thank John Britnell and Peter M.~Neumann for helpful comments.

\bigskip

\phantom{text}
\vspace*{-18pt}
\renewcommand{\MR}[1]{\relax}
\def\cprime{$'$} \def\Dbar{\leavevmode\lower.6ex\hbox to 0pt{\hskip-.23ex
  \accent"16\hss}D} \def\cprime{$'$}
\providecommand{\bysame}{\leavevmode\hbox to3em{\hrulefill}\thinspace}
\providecommand{\MR}{\relax\ifhmode\unskip\space\fi MR }
\providecommand{\MRhref}[2]{%
  \href{http://www.ams.org/mathscinet-getitem?mr=#1}{#2}
}
\providecommand{\href}[2]{#2}


\end{document}